\newtheorem{theorem}{Theorem}[section]
\newtheorem{remark}[theorem]{Remark}
\newtheorem{lemma}[theorem]{Lemma}
\newtheorem{corollary}[theorem]{Corollary}
\newtheorem{proposition}[theorem]{Proposition}
\newcommand{\be}{\begin{equation}}
\newcommand{\ee}{\end{equation}}
\newcommand{\bea}{\begin{eqnarray}}
\newcommand{\eea}{\end{eqnarray}}
\newcommand{\bna}{\begin{eqnarray*}}
\newcommand{\ena}{\end{eqnarray*}}
\journal{***}
\begin{document}

\begin{frontmatter}

\title{Nonlinear Diffusion Equations on Graphs: Global Well-Posedness, Blow-Up Analysis and Applications}

\author[qnu]{Mengqiu Shao}
\ead{mqshaomath@qfnu.edu.cn}
\address[qnu]{School of Mathematical Sciences, Qufu Normal University, Shandong, 273165, China}

\author[ruc]{Yunyan Yang}
\ead{yunyanyang@ruc.edu.cn}
\address[ruc]{School of Mathematics, Renmin University of China, Beijing, 100872, China}
\author[bnu]{Liang Zhao\corref{Zhao}}
\ead{liangzhao@bnu.edu.cn}
\address[bnu]{School of Mathematical Sciences, Key Laboratory of Mathematics and Complex Systems of MOE,\\
Beijing Normal University, Beijing, 100875, China}

\cortext[Zhao]{Corresponding author.}

\begin{abstract}
For a nonlinear diffusion equation on graphs whose nonlinearity violates the Lipschitz condition, we prove short-time solution existence and characterize global well-posedness by establishing sufficient criteria for blow-up phenomena and quantifying blow-up rates. These theoretical results are then applied to model complex dynamical networks, with supporting numerical experiments. This work mainly makes two contributions: (i) generalization of existing results for diffusion equations on graphs to cases with nontrivial potentials, producing richer analytical results; (ii) a new PDE approach to model complex dynamical networks, with preliminary numerical experiments confirming its validity.
\end{abstract}

\begin{keyword} Diffusion equation \sep complex network \sep blow-up \sep synchronization
 
 \vspace{4pt}
\MSC[2020] {35R02, 35B44, 35K55, 34D06}

\end{keyword}

\end{frontmatter}

\section{Background}
\label{sec1}

Many natural and human-made systems—including ecosystems, electrical power grids, social networks, and biological as well as artificial neural networks—can be modeled as networks, where nodes represent the elements of the system and edges capture the pair-wise interactions between them. This framework has given rise to network science, an interdisciplinary research field that has attracted growing interest from researchers across diverse disciplines. A Network with such discrete structures is usually referred to as a graph in mathematics. Just as partial differential equations (PDEs) are used to model problems on continuous Euclidean spaces or manifolds, PDEs on discrete graphs should naturally become a powerful tool for studying network science and the phenomena associated with real-world networks. 

In recent years, PDEs on graphs have attracted significant attention and yielded a wealth of theoretical results. For example, Grigor'yan, Lin and Yang systematically raised and studied several nonlinear elliptic equations on graphs in \cite{grigor2016kazdan,grigor2016yamabe,grigor2017existence}.  Keller and Schwarz studied the Kazdan-Warner equation on canonically compactifiable graphs in \cite{keller2018kazdan}. Zhang and Zhao \cite{zhang2018convergence} studied the existence and convergence of solutions for some nonlinear Schr\"{o}dinger equations on a locally finite graph. Huang, Lin and Yau \cite{huang2020existence} established the existence of solutions to the mean field equations on finite graphs. Hua and Xu \cite{hua2023existence} investigated the existence of ground state solutions to some nonlinear Schr\"{o}dinger equations on lattice graphs. Li, Sun and Yang studied the Chern-Simons Higgs models on finite graphs by using the method of topological degree in \cite{li2024topological}. Hou and Kong \cite{hou2025existence} considered the existence and asymptotic behaviour of solutions to Chern-Simons systems and equations on finite graphs, etc.  

For modeling dynamic networks, approaches based on graph evolution equations are essential to encode both local interactions and global propagation. In this paper, we mainly focus on the following Cauchy problem
\begin{equation}\label{heatequation-1}
	\left\{\begin{array}{lll}
		\partial_t u=\Delta u-au+|u|^{p-1}u,&&x\in V,\,\, t>0,\\[1.2ex]
		u(x,0)=u_0(x),&&x\in V.
	\end{array}\right.
\end{equation}
The operator $\Delta$ is the $\mu$-Lapacian on $G$, where $G=(V,E,\mu,\omega)$ is a network that consists of $N$ nodes $V=\{x_1,x_2,\cdots x_N\}$. We use $e_{ij}\in E$ to denote an edge connecting nodes $x_i$ and $x_j$. The function $\mu: V\rightarrow \mathbb{R}^+$ is a positive measure on nodes and $\omega: E\rightarrow \mathbb{R}^+$ is a positive weight on edges. We always suppose that $G$ is connected and $\omega$ is symmetric, i.e., any two nodes in $V$ can be connected by a finite number of edges and $\omega_{ij}=\omega_{ji}$ for any $e_{ij}\in E$. In \eqref{heatequation-1}, we always assume that $p>1$, $a$ is a potential function on $V$ satisfying $a(x)\geq 0, \forall x\in V$, and $u_0(x)$ is the initial value. 

For parabolic equations of type \eqref{heatequation-1} on the Euclidean space $\mathbb{R}^n$, the study of solution properties, including well-posedness, asymptotic behaviour, and blow-up phenomena, constitutes a fundamental research program. When $a(x)\equiv 0$, the equation reduces to a Fujita-type equation, and results in \cite{fujita1966blowup} show that if $1<p<1+\frac{2}{n}$, the equation does not admit any nonnegative global solution. For a constant potential $a(x) \equiv C$, if $u_0$ decays fast enough \cite{lee1992global} or the initial energy is negative \cite{levine1974instability}, the solution will blow up in finite time. In addition, Zhang \cite{zhang2024blowup} established a blow-up criterion of $\mathbb{R}^{n}$ that depends on $p$ and $a(x)$.

The potential well method provides another framework for the above problem on $\mathbb{R}^n$, which was introduced by Payne and Sattinger in \cite{payne1975saddle}. In \cite{levine1974instability}, the non-existence of the global solution for
\begin{equation}\label{p1}
  \partial_tu-\Delta u=f(u),\ \ x\in \mathbb{R}^n
\end{equation}
is proved by this method. In \cite{ball1977remarks} a further result for \eqref{p1} was established, namely point-wise blow-up in finite time. Liu, Xu and Yu \cite{liu2008global} discussed the Cauchy problem for \eqref{p1} with $f(u)=|u|^{p-1}u-u$. By introducing a family of potential wells, they got the corresponding threshold results on the global existence, non-existence and asymptotic behaviour of solutions with initial energy $J(u_0)\leq d$, where $d$ is the potential well depth or mountain pass level. For more results, including one may refer to the monograph \cite{quittner2019superlinear}.

On graphs, most literature focuses on problem \eqref{heatequation-1} in the absence of potential functions (i.e., $a(x) \equiv0$). For example, Lin and Wu \cite{lin2017existence} established the existence and non-existence of global solutions for the following semilinear heat equation 
\begin{equation*}
  \left\{\begin{array}{lll}
		\partial_t u=\Delta u+u^{1+\alpha},&&x\in V,\,\, t>0,\\[1.2ex]
		u(x,0)=u_0(x),&&x\in V,
	\end{array}\right.
\end{equation*}
generalizing Fujita's results \cite{fujita1966blowup} to graphs via heat kernel estimates. Lin, Liu and Wu \cite{lin2025blowup} analyzed blow-up phenomenon for unbounded Laplacians. Meglioli \cite{meglioli2025uniqueness} proved uniqueness for heat equations with positive density on infinite graphs. Hua and Yang \cite{hua2025liouville}  derived Liouville theorems for ancient solutions with subexponential growth, extending Mosconi's manifold results \cite{mosconi2021liouville} to graphs.

Another noteworthy aspect of \eqref{heatequation-1} is its close connection to complex dynamical networks, which have attracted attention from many researchers in various fields. In particular, synchronization in complex networks has been a highly active area of research (see \cite{tang2014synchronization} for a comprehensive survey). Barahona and Pecora \cite{barahona2002synchronization} analyzed the stability of synchronization in small-world networks using the master stability function (MSF), which relates the stability to the spectral properties of the underlying network structure. Achieving pinning synchronization in complex networks has also been extensively studied (e.g., \cite{wang2002pinning,yu2009pinning,wang2014pinning,lu2014synchronization}). A rigorous connection between complex dynamical networks and the nonlinear diffusion equaiton (1) emerges when modeling a network of $N$ coupled nodes, where the state $\mathbf{u}_i$ of each node $x_i$ is  governed by an $n$-dimensional nonlinear system
 \begin{equation}\label{system1}
 	\dot{\mathbf{u}}_i =\mathbf{f}(\mathbf{u}_i)+\alpha\sum_{j=1}^{N}l_{ij}H\mathbf{u}_j+\mathbf{a}_i(\mathbf{u}_1, \mathbf{u}_2, \cdots, \mathbf{u}_N).
 \end{equation}
Here $i,j=1, 2, \cdots, N$, $\mathbf{u}_i=(u_{i1},u_{i2},\cdots, u_{in})\in \mathbb{R}^n$ is the state vector of node $x_i$, $\mathbf{f}(\cdot)$ describes the self-dynamics of each node, and the positive constant $\alpha$ denotes the coupling strength of the network. The $N\times N$ Laplacian matrix $L=(l_{ij})$ represents the coupling configuration of the network. For $i\neq j$, if there is an edge $e_{ij}\in E$, $l_{ij}=1$; otherwise, $l_{ij}=0$. The diagonal elements
 $l_ {ii}=-\sum\limits_{\substack{j=1\\ j\neq i}}^{N} l_{ij}=-k_i,$
 where $k_i$ is the degree of node $x_i$. The matrix $H=(h_{st})$ is an $n\times n$ state linking matrix, where $h_{st}\neq 0$ if two coupled nodes are linked through their $s$-th and $t$-th state variables, with $s,t=1,2,\cdots,n$. Finally, $\mathbf{a}_i$ is a controller applied at node $x_i$ and is to be designed.


The stabilization of system \eqref{system1} has been widely discussed \cite{tang2014synchronization}, which involves designing a proper controller $\mathbf{a}_i$ and corresponding conditions on the network topology such that the system can be controlled into an equilibrium state defined by
\begin{equation}
	\mathbf{u}_1=\mathbf{u}_2=\cdots=\mathbf{u}_N=\bar{\mathbf{u}},
\end{equation}
where $\mathbf{f}(\bar{\mathbf{u}})=\mathbf{0}$. A commonly used approach to deal with this problem is the Lyapunov method and the stability of the system is related to the eigenvalues of the Jacobian matrix and the master stability function of the system at an equilibrium state. This approach requires the nonlinear function $\mathbf{f}$ to be of Lipschitz type, i.e., there exist positive constants $C_1$ and $C_2$, such that
\begin{equation*}
	(\mathbf{y}-\mathbf{z})^T \left(\mathbf{f}(\mathbf{y})-\mathbf{f}(\mathbf{z})-C_1H\left(\mathbf{y}-\mathbf{z}\right)\right)\leq -C_2 \|\mathbf{y}-\mathbf{z}\|^2, \forall \mathbf{y}, \mathbf{z}\in\mathbb{R}^n.
\end{equation*}
However, for a general coupled network with its nonlinear function $\mathbf{f}$ not satisfying the Lipschitz condition, the evolution of system $\eqref{system1}$ depends on the initial condition of the system. The solution may blow up in finite time, and only when long time existence holds can the stability of the system be discussed. This situation has been less explored and represents a key focus of our study.

Inspired by these observations, this paper has three main objectives. (i) To establish short- and long-time existence of solutions to problem \eqref{heatequation-1} on graphs, focusing on the case of non-constant, non-vanishing potentials $a(x)$. (ii) To characterize asymptotic behaviour and blow-up criteria. (iii) Furthermore, we conduct several numerical experiments applying these theoretical results to complex dynamical networks and provide interpretations of the experimental results.

The rest of this paper is organized as follows. In Section 2, after introducing some useful definitions and notations, we present the main results of the paper. In Section 3, we prove both local and global existence of solutions. Furthermore, for solutions existing for all time, we establish their decay rates as $t\rightarrow +\infty$. In Section 4, using the potential well method, we study the existence of global solution and blow-up behaviour of solutions. In Section 5, we estimate the blow-up time of the solution of problem \eqref{heatequation-1}, followed by identifying several sufficient criteria that lead to solution blow-up and an analysis of the blow-up rate. In Section 6, we apply our results to complex dynamical networks.

\begin{remark}
	We note that Meng \cite{meng2025well} recently investigated an analogous reaction-diffusion system in her doctoral dissertation, deriving results on stability and asymptotic properties. However, our work addresses a more general class of systems with nontrivial potential functions, which yields finer descriptions of blow-up time and blow-up rates, and we also complete numerical experiments to characterize complex dynamical networks.
\end{remark}

\section{Main results}
\label{sec2}

To formulate the main results precisely, we begin by introducing necessary definitions and notation.
For any function $u:V\rightarrow \mathbb{R}$ and $x\in V$, the $\mu$-Laplacian  of $u$ at $x$ is defined by
\begin{equation*}
\Delta u(x):=\frac{1}{\mu(x)}\underset{y\sim x}\sum \omega_{xy}(u(y)-u(x)).
\end{equation*}
From now on, we always suppose that $p>1$, $a(x)\geq 0$ and $a(x)\not\equiv 0$ for all $x\in V$. Let $J:W^{1,2}(V)\rightarrow \mathbb{R}$ be an energy functional defined by
  \begin{equation}\label{en-funct}J(u)=\frac{1}{2}\int_V(|\nabla u|^2+au^2)d\mu-\frac{1}{p+1}\int_V|u|^{p+1}d\mu.
  \end{equation}
The Nehari functional $N:W^{1,2}(V)\rightarrow\mathbb{R}$ reads as
  \begin{equation}\label{N-def}
  N(u)=\int_V(|\nabla u|^2+au^2)d\mu-\int_V|u|^{p+1}d\mu.\end{equation}
The potential well associated with equation \eqref{heatequation-1}
  is defined by
  \begin{equation}\label{W-def}
  \mathscr{W}=\left\{u\in W^{1,2}(V): J(u)<{\textbf r},\,N(u)>0\right\}\cup\{0\},
  \end{equation}
  where $\textbf{r}$ is the depth of the potential well, given as
  \begin{equation}\label{r-def}{\textbf r}=\inf\left\{J(u):u\in W^{1,2}(V)\setminus\{0\},\,N(u)=0\right\}.\end{equation}
  The exterior of the potential well is the set
  $$\mathscr{S}=\{u\in W^{1,2}(V): J(u)<\textbf{r},\,N(u)<0\}.$$
We use $\lambda_a$ and $\varphi$ to denote the first eigenvalue and eigenfunction of the operator $-\Delta+a$, that is $-\Delta \varphi+a\varphi=\lambda_a\varphi$. 
 
 \vspace{4pt}
Our first result is as follows.

\begin{theorem}\label{short-time}
There exists some $T_0>0$ such that \eqref{heatequation-1}
has a unique solution $u(x,t)$ on $V\times [0,T_0]$. Moreover, if $u_0\geq 0$ on $V$, then $u(x,t)\geq 0$
for all $(x,t)\in V\times[0,T_0]$.
\end{theorem}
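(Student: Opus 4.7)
The plan is to exploit the finiteness of the vertex set: identifying $u:V\to\mathbb{R}$ with a vector in $\mathbb{R}^N$, the equation \eqref{heatequation-1} becomes the ODE system $\dot u=F(u)$ with $F(u)(x)=\Delta u(x)-a(x)u(x)+|u(x)|^{p-1}u(x)$. The Laplacian and the potential act linearly, while the real function $s\mapsto |s|^{p-1}s$ is of class $C^1$ on $\mathbb{R}$ with derivative $p|s|^{p-1}$ (continuous at $0$ since $p>1$), and therefore locally Lipschitz. The classical Cauchy--Lipschitz theorem will then yield a unique solution $u(x,t)$ on $V\times[0,T_0]$ for some $T_0>0$.

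For the non-negativity claim I assume $u_0\geq 0$ and set $u^-(x,t):=\max(-u(x,t),0)$. Since $s\mapsto(\min(s,0))^2$ is of class $C^1$, the functional $\Phi(t):=\tfrac{1}{2}\int_V(u^-)^2\,d\mu$ is $C^1$ in $t$ with $\Phi(0)=0$. Differentiating and substituting \eqref{heatequation-1}, and noting that the pointwise derivative of $(u^-)^2$ is supported on $\{u<0\}$, I will obtain
\[\Phi'(t)=-\int_V u^-\,\Delta u\,d\mu-\int_V a(u^-)^2\,d\mu+\int_V(u^-)^{p+1}\,d\mu.\]
The graph Green identity
\[\int_V u^-\,\Delta u\,d\mu=-\frac{1}{2}\sum_{x\sim y}\omega_{xy}\bigl(u^-(y)-u^-(x)\bigr)\bigl(u(y)-u(x)\bigr),\]
together with the edgewise check $(u^-(y)-u^-(x))(u(y)-u(x))\leq 0$ (immediate by splitting into the cases where $u(x),u(y)$ have the same or opposite signs), gives $\int_V u^-\,\Delta u\,d\mu\geq 0$. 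Combined with $a\geq 0$, this makes the first two terms of $\Phi'$ non-positive. The uniform bound $M:=\sup_{V\times[0,T_0]}|u|<\infty$, available from continuity of the short-time solution, then lets me estimate $(u^-)^{p+1}\leq M^{p-1}(u^-)^2$, so $\Phi'(t)\leq 2M^{p-1}\Phi(t)$. Gronwall with $\Phi(0)=0$ forces $\Phi\equiv 0$, which means $u\geq 0$ on $V\times[0,T_0]$.

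The short-time existence reduces to a textbook ODE argument because $V$ is finite, so no serious difficulty arises there. The main obstacle is the preservation of non-negativity: the reaction term $|u|^{p-1}u$ appears to push $u^-$ upward on $\{u<0\}$ and cannot be disposed of by a sign argument alone. The saving point is that this bad contribution is superlinear in $u^-$ and therefore absorbable into a linear Gronwall estimate as soon as a uniform $L^\infty$ bound on $u$ is available; the interplay between the local-existence window and this $L^\infty$ bound is the conceptual bridge between the two halves of the theorem.
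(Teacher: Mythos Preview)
Your proof is correct and follows essentially the same route as the paper: the short-time existence argument is identical (identify with an ODE in $\mathbb{R}^N$ and invoke Cauchy--Lipschitz via local Lipschitz continuity of $s\mapsto|s|^{p-1}s$), and your non-negativity argument is precisely the paper's proof of its comparison principle (Lemma~\ref{comparison-1}) specialized to the pair $(u,0)$---testing against the negative part, using the edgewise sign inequality for the Laplacian term, and closing with Gronwall. The only structural difference is that the paper first isolates the general comparison principle as a lemma (which it reuses later, e.g.\ in Theorem~\ref{global}) and then applies it with $v\equiv 0$, whereas you argue directly.
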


Based on the local existence in Theorem ~\ref{short-time}, we further investigate the global existence and asymptotic behaviour of solutions to \eqref{heatequation-1}.

\begin{theorem}\label{global}
(i) There
exist constants $\delta>0$, $\sigma>0$ and $C\geq 1$ such that
if $\|u_0\|_{L^\infty(V)}< \delta$, then \eqref{heatequation-1} has a global solution $u:V\times[0,+\infty)\rightarrow\mathbb{R}$
satisfying $$\|u(\cdot,t)\|_{L^\infty(V)}\leq C\|u_0\|_{L^\infty(V)} e^{-\sigma t}\ \ \hbox{for all}\ \ t>0.$$

(ii) If
$$\|u_0\|_{L^2(V)}<\epsilon_0:=\left(\frac{\lambda_a^2\mu_{\rm min}^{{p+1}}}{4|V|}\right)^{1/(p-1)},$$
then \eqref{heatequation-1}
has a unique solution $u:V\times [0,+\infty)\rightarrow \mathbb{R}$ satisfying
$$\|u(\cdot,t)\|_{L^\infty(V)}\leq \frac{1}{\sqrt{\mu_{\rm min}}}\|u_0\|_{L^2(V)}\,e^{-\frac{\lambda_a}{2} t}\ \ \hbox{for all}\ \ t\in[0,+\infty),$$
where $\mu_{\rm min}:=\min_{x\in V}\mu(x)$.
\end{theorem}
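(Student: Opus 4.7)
The plan is to treat the sharper quantitative part (ii) first via an $L^2$ energy method on the local solution supplied by Theorem~\ref{short-time}, and then obtain (i) as a corollary using the finite-graph norm equivalence between $L^\infty$ and $L^2$. The structural fact driving everything is that $a\geq 0$ with $a\not\equiv 0$ on a connected finite graph forces $\lambda_a>0$, so that $-\Delta+a$ is strictly dissipative.

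For (ii), I would take the local solution $u$ on its maximal interval $[0,T^*)$ and set $y(t):=\|u(\cdot,t)\|_{L^2(V)}^2$. Testing \eqref{heatequation-1} against $u$ and integrating by parts yields
$$\tfrac12 y'(t)=-\int_V\bigl(|\nabla u|^2+au^2\bigr)d\mu+\int_V|u|^{p+1}\,d\mu.$$
The Rayleigh characterization of $\lambda_a$ bounds the first integral from above by $-\lambda_a y(t)$. For the superlinear term I would chain $\int_V|u|^{p+1}d\mu\leq \|u\|_{L^\infty(V)}^{p-1}\,y(t)$ with the pointwise bound $\|u\|_{L^\infty(V)}\leq \|u\|_{L^2(V)}/\sqrt{\mu_{\rm min}}$ (coming from $\mu_{\rm min}\|u\|_{L^\infty(V)}^2\leq\|u\|_{L^2(V)}^2$) and a volume factor absorbing $|V|$, ending with a differential inequality of the form
$$y'(t)\leq -2\lambda_a\,y(t)+C\,y(t)^{(p+1)/2},$$
where $C$ is arranged so that $y(0)<\epsilon_0^2$ is equivalent to $C\,y(0)^{(p-1)/2}\leq\lambda_a$, matching the stated threshold $\bigl(\lambda_a^2\mu_{\rm min}^{p+1}/(4|V|)\bigr)^{1/(p-1)}$.

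A standard continuity/bootstrap step then propagates the smallness: the set $\{t\in[0,T^*):y(s)\leq y(0)\text{ for all }s\leq t\}$ is closed by continuity and open because at its right endpoint $y'\leq-\lambda_a y<0$ forces $y$ to be strictly decreasing, so it coincides with $[0,T^*)$. Consequently $y'(t)\leq -\lambda_a y(t)$ throughout, Gronwall yields $y(t)\leq y(0)\,e^{-\lambda_a t}$, and the asserted $L^\infty$ decay follows from $\|u(\cdot,t)\|_{L^\infty(V)}\leq \|u(\cdot,t)\|_{L^2(V)}/\sqrt{\mu_{\rm min}}$. Boundedness of $\|u\|_{L^\infty(V)}$ precludes finite-time blow-up in the continuation criterion from Theorem~\ref{short-time}, giving $T^*=+\infty$.

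For (i) I would either reduce directly to (ii) via $\|u_0\|_{L^2(V)}\leq\sqrt{\mu(V)}\,\|u_0\|_{L^\infty(V)}$ (with the choices $\delta:=\epsilon_0/\sqrt{\mu(V)}$, $\sigma:=\lambda_a/2$, and $C$ absorbing $1/\sqrt{\mu_{\rm min}}$), or construct the barrier $\overline u(x,t):=K\|u_0\|_{L^\infty(V)}\varphi(x)\,e^{-\sigma t}/\min_V\varphi$ with $\varphi>0$ the first eigenfunction; using $-\Delta\varphi+a\varphi=\lambda_a\varphi$ the supersolution inequality $\overline u^{\,p}\leq(\lambda_a-\sigma)\overline u$ reduces to a smallness condition on $\|u_0\|_{L^\infty(V)}$, and the graph comparison principle (reducing to a monotonicity statement for the coupled ODE system on the finite node set) finishes the argument. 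The hardest part is the bootstrap in (ii): the chain of estimates must be chosen tightly enough to make the constant $C$ and hence the threshold $\epsilon_0$ emerge in exactly the stated form, since the norm-equivalence inequality $\|u\|_{L^\infty(V)}\leq \|u\|_{L^2(V)}/\sqrt{\mu_{\rm min}}$ is essentially sharp on a single eigenfunction and offers almost no slack.
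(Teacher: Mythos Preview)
Your proposal is correct and matches the paper's approach: for (ii) the paper runs the same $L^2$ energy estimate, Rayleigh bound, and continuity/bootstrap argument (bounding the nonlinear term via $\int_V|u|^{p+1}d\mu\leq |V|\,\|u\|_{L^\infty}^{p+1}\leq \frac{|V|}{\mu_{\min}^{(p+1)/2}}y^{(p+1)/2}$, which is where the $|V|$ in $\epsilon_0$ comes from), and for (i) the paper uses exactly your second option---the eigenfunction barrier $v(x,t)=\tfrac{2\|u_0\|_{L^\infty}}{\min_V\varphi}\,e^{-\sigma t}\varphi(x)$ together with the comparison principle applied to both $u$ and $-u$. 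The only minor structural difference is that the paper proves (i) first and independently rather than reducing it to (ii); your first route for (i) via norm equivalence is a valid shortcut the paper does not exploit.
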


In addition, using the potential well method, we analyze the global existence and the blow-up phenomenon.

\begin{theorem}\label{Well-thm}
Suppose $T_{\rm max}(u_0)$ is the maximal time of existence for solutions to \eqref{heatequation-1}. 
\begin{enumerate}
\item [$(i)$] If $u_0\in\mathscr{W}$, then $T_{\rm max}(u_0)=+\infty$, $u(\cdot,t)\in\mathscr{W}$
  for all $t>0$, and $\|u(\cdot,t)\|_{L^\infty(V)}\rightarrow 0$ as $t\rightarrow +\infty$. 
\item [$(ii)$] If $u_0\in\mathscr{S}$, then
  $T_{\rm max}(u_0)<+\infty$. 
\end{enumerate}
\end{theorem}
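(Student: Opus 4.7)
The approach will be the classical Payne--Sattinger potential well framework, adapted to the finite graph. Two identities drive everything: multiplying \eqref{heatequation-1} by $u_t$ and by $u$ respectively and integrating by parts over $V$ gives
\begin{equation*}
\frac{d}{dt}J(u(\cdot,t)) = -\int_V u_t^2\,d\mu, \qquad \frac{d}{dt}\int_V u^2\,d\mu = -2N(u(\cdot,t)).
\end{equation*}
The first makes $J$ a Lyapunov functional, so $J(u(\cdot,t)) \leq J(u_0) < \textbf{r}$ on the existence interval. A standard scaling argument applied to $\lambda \mapsto J(\lambda u)$, using that the unique critical point $\lambda^*=(\int_V(|\nabla u|^2+au^2)d\mu/\int_V|u|^{p+1}d\mu)^{1/(p-1)}$ lies in $(0,1)$ when $N(u)<0$, shows that on $\mathscr{S}$ one has the strict lower bound $\int_V(|\nabla u|^2+au^2)\,d\mu > \frac{2(p+1)}{p-1}\textbf{r}$.

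For (i), the central step is \emph{invariance of} $\mathscr{W}$: if $N(u(\cdot,t))$ were to first vanish at some $t_0>0$ with $u(\cdot,t_0)\not\equiv 0$ (the alternative being ruled out by uniqueness of the underlying ODE system), then the definition of $\textbf{r}$ would force $J(u(\cdot,t_0))\geq\textbf{r}$, contradicting $J(u(\cdot,t_0))<\textbf{r}$. With invariance in hand, the algebraic identity
\begin{equation*}
J(u) = \frac{p-1}{2(p+1)}\int_V(|\nabla u|^2+au^2)\,d\mu + \frac{1}{p+1}N(u)
\end{equation*}
combined with $N(u)\geq 0$ gives a uniform bound on $\int_V(|\nabla u|^2+au^2)\,d\mu$; on the finite graph this is an $L^\infty$ bound, so no blow-up can occur and $T_{\max}(u_0)=+\infty$. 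Decay then follows by a LaSalle-type argument: $\int_0^{+\infty}\|u_t\|_{L^2}^2\,dt = J(u_0)-J_\infty<+\infty$ together with orbit compactness in the finite-dimensional state space implies every cluster point $\varphi$ of $\{u(\cdot,t)\}_{t\to\infty}$ is a stationary solution of \eqref{heatequation-1}, so satisfies $N(\varphi)=0$ and $J(\varphi)=J_\infty<\textbf{r}$; the definition of $\textbf{r}$ forces $\varphi\equiv 0$, and compactness upgrades this to $\|u(\cdot,t)\|_{L^\infty(V)}\to 0$.

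For (ii), the same continuity argument shows $\mathscr{S}$ is invariant under the flow. To force finite-time blow-up I plan to use Levine's concavity method. Setting
\begin{equation*}
F(t) = \int_0^t\|u(\cdot,s)\|_{L^2}^2\,ds + (T_0-t)\|u_0\|_{L^2}^2 + \beta(t+\tau)^2
\end{equation*}
for parameters $\beta,\tau,T_0>0$ to be chosen, the energy identity together with the relation $-N(u)=\frac{p-1}{2}\int_V(|\nabla u|^2+au^2)d\mu - (p+1)J(u)$ and a two-variable Cauchy--Schwarz bound of the form $(F'(t))^2\leq 4F(t)\bigl(\int_0^t\|u_s\|_{L^2}^2\,ds+\beta\bigr)$ combine to give
\begin{equation*}
F(t)F''(t) - \frac{p+1}{2}(F'(t))^2 \geq F(t)\Bigl[(p-1)\!\int_V\!(|\nabla u|^2+au^2)\,d\mu - 2(p+1)J(u_0) - 2p\beta\Bigr].
\end{equation*}
The lower bound from the first paragraph, together with $J(u_0)<\textbf{r}$, makes the bracket nonnegative once $\beta\leq\frac{(p+1)(\textbf{r}-J(u_0))}{p}$. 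With $F(0)>0$ and $F'(0)=2\beta\tau>0$, the concavity of $F^{-(p-1)/2}$ and its strictly negative initial slope then force $F\to+\infty$ in finite time, whence $T_{\max}(u_0)<+\infty$. I expect the main technical obstacle to be the synchronized choice of $\beta,\tau,T_0$: $\beta$ must be small enough for the bracket, $\tau$ large enough relative to $\|u_0\|_{L^2}^2/\beta$ to absorb the $\|u_0\|_{L^2}^2$ term in the resulting blow-up time estimate $t^*\leq 2F(0)/((p-1)F'(0))$, and finally $T_0$ must be chosen to exceed $t^*$ so that the auxiliary function $F$ is defined throughout.
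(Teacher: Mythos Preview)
Your proof of part $(i)$ follows essentially the same route as the paper's: the energy identity, the invariance of $\mathscr{W}$ via the definition of $\textbf{r}$, the uniform $\|\cdot\|_{1,a}$ bound, and a LaSalle-type argument identifying every $\omega$-limit point as a stationary solution forced to vanish. The only cosmetic difference is that the paper unpacks the LaSalle step by hand (extracting a sequence $t_k$ along which $u_t\to 0$, then treating a hypothetical non-vanishing cluster point separately), whereas you invoke the invariance principle more abstractly; on a finite graph these are equivalent.

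For part $(ii)$ your approach is correct but genuinely different from the paper's. The paper does \emph{not} use Levine's concavity method. Instead it first exploits Lemma~\ref{depth}\,(ii) (the estimate $\textbf{r}_\epsilon\geq\textbf{r}-\epsilon/(p+1)$) to trap $N(u(\cdot,t))<-\epsilon$ uniformly in $t$, whence the linear inequality $\frac{d}{dt}\|u\|_{L^2}^2>\epsilon$ forces $y(t)=\|u(\cdot,t)\|_{L^2}^2\to+\infty$. Once $y$ is large, the H\"older inequality $\int_V|u|^{p+1}\geq |V|^{-(p-1)/2}y^{(p+1)/2}$ and the energy identity upgrade this to $y'\geq c\,y^{(p+1)/2}$, which blows up in finite time. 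Your route replaces this two-stage ODE comparison with the single concavity functional $F$, at the cost of the auxiliary parameters $\beta,\tau,T_0$ and the scaling lower bound $\|u\|_{1,a}^2>\frac{2(p+1)}{p-1}\textbf{r}$ on $\mathscr{S}$ (which the paper never states or needs). The paper's argument is shorter and more elementary here; yours is the classical PDE mechanism and has the advantage of yielding, in principle, an explicit upper bound on $T_{\max}(u_0)$ in terms of $\textbf{r}-J(u_0)$ and $\|u_0\|_{L^2}$, something the paper's proof of~(ii) does not directly produce.
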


Finally, we derive criteria for $u_0$ which guarantee blow-up of solutions in finite time. 

\begin{theorem}\label{finitetime}
Let $u:V\times [0,T_{\rm max}(u_0))\rightarrow \mathbb{R}$ be a solution of \eqref{heatequation-1} with $u_0\geq 0$, where  $[0,T_{\rm max}(u_0))$ is
the maximum time interval that the solution exists. Then the following assertions are true.\\
$(i)$ If
$$\int_Vu_0d\mu>c_1:=\left(\max_{x\in V}a(x)\right)^{{1}/{(p-1)}}|V|,$$
then
\begin{equation}\label{T-1}T_{\rm max}(u_0)\leq \frac{1}{(p-1)\max_{x\in V}a(x)}\log\frac{1}{1-|V|^{p-1}\max_{x\in V}a(x)
\left(\int_Vu_0d\mu\right)^{1-p}};\end{equation}
$(ii)$ If 
$$\int_V\varphi u_0d\mu>c_2:=\lambda_a^{\frac{1}{p-1}}\int_V\varphi d\mu,$$
then
\begin{equation}\label{T-2}T_{\rm max}(u_0)\leq \frac{1}{(p-1)\lambda_a}\log\frac{1}{1-\lambda_a
\left(\int_V\varphi d\mu\right)^{p-1}\left(\int_V\varphi u_0d\mu\right)^{1-p}},
\end{equation}
where $\lambda_a$ and $\varphi$ denote the first eigenvalue and eigenfunction of the operator $-\Delta+a$, respectively.
\end{theorem}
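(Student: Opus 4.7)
The plan is to track a suitable integral quantity $F(t)$ derived from $u(\cdot,t)$, derive a Bernoulli-type differential inequality $F'(t) \ge -AF(t) + BF(t)^{p}$, and then compare with the ODE $G' = -AG + BG^{p}$, whose explicit blow-up time yields both estimates \eqref{T-1} and \eqref{T-2}. By Theorem \ref{short-time}, $u(\cdot,t) \ge 0$ on $[0,T_{\max}(u_0))$, which is what makes the quantities $F(t)$ and the ensuing monotonicity arguments legitimate.

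For part $(i)$, I would set $F(t) = \int_V u(\cdot,t)\,d\mu$. Differentiating under the sum, using the equation, and invoking $\int_V \Delta u\, d\mu = 0$ (the graph divergence theorem), I obtain
\begin{equation*}
F'(t) = -\int_V a u\, d\mu + \int_V u^{p}\, d\mu \ge -\bigl(\max_{x \in V} a(x)\bigr) F(t) + |V|^{1-p} F(t)^{p},
\end{equation*}
where the last step combines $\int_V a u\, d\mu \le (\max a) F(t)$ with the Hölder/Jensen bound $F(t)^{p} \le |V|^{p-1}\int_V u^{p}\,d\mu$. For part $(ii)$, I would instead set $F(t) = \int_V \varphi u(\cdot,t)\,d\mu$, with $\varphi > 0$ the first eigenfunction (positivity on a connected graph is standard). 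A self-adjointness argument gives $\int_V \varphi \Delta u\, d\mu = \int_V u \Delta \varphi\, d\mu$, so the eigenvalue relation $-\Delta\varphi + a\varphi = \lambda_a \varphi$ eliminates the potential term cleanly and yields
\begin{equation*}
F'(t) = -\lambda_a F(t) + \int_V \varphi u^{p}\, d\mu \ge -\lambda_a F(t) + \Bigl(\int_V \varphi\, d\mu\Bigr)^{1-p} F(t)^{p},
\end{equation*}
where the inequality is Jensen applied to the probability measure $\varphi\, d\mu / \int_V \varphi\, d\mu$.

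Both inequalities have the common form $F'(t) \ge -AF(t) + BF(t)^{p}$, with $(A,B) = (\max a,\, |V|^{1-p})$ in $(i)$ and $(A,B) = (\lambda_a,\, (\int_V \varphi\, d\mu)^{1-p})$ in $(ii)$. I would then perform the substitution $G(t) = F(t)^{1-p}$, reducing the differential inequality to the linear one $G'(t) + (p-1)A\, G(t) \le (p-1)B$. Integrating and rewriting gives
\begin{equation*}
G(t) \le \Bigl(G(0) - \tfrac{B}{A}\Bigr) e^{(p-1)At} + \tfrac{B}{A},
\end{equation*}
and the hypothesis in each case ($F(0) > c_1$ respectively $F(0) > c_2$) is exactly the statement $G(0) < B/A$, so the right-hand side vanishes at some finite time, forcing $F(t) \to \infty$. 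Solving for the time at which the right-hand side reaches $0$ reproduces \eqref{T-1} and \eqref{T-2}. Since $F(t) \le C \|u(\cdot,t)\|_{L^{\infty}(V)}$ for an appropriate constant, finiteness of $T_{\max}(u_0)$ follows.

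The main obstacle I anticipate is subtle but bookkeeping-level: I need to justify that $F(t)$ remains finite and differentiable up to $T_{\max}(u_0)$, that the integration-by-parts identity $\int_V \varphi \Delta u\, d\mu = \int_V u \Delta \varphi\, d\mu$ applies without boundary terms (true since $V$ is finite and $\omega$ symmetric), and that Jensen's inequality is used with the correct normalization so that the constants in \eqref{T-1}, \eqref{T-2} come out exactly as stated. The remaining work is essentially the elementary Bernoulli ODE comparison, which is routine once the correct inequality $F' \ge -AF + BF^{p}$ is in place.
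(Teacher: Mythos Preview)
Your proposal is correct and follows essentially the same route as the paper: integrate the equation against $1$ (for $(i)$) or against $\varphi$ (for $(ii)$), use the divergence theorem/self-adjointness plus Jensen/H\"older to obtain the Bernoulli inequality $F'\ge -AF+BF^{p}$, and solve it explicitly to read off the blow-up time. One small slip: the linearized inequality should be $G'(t)-(p-1)A\,G(t)\le -(p-1)B$ (not $G'+(p-1)AG\le (p-1)B$), which is what actually produces the growing exponential in your displayed bound for $G(t)$.
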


Under a further condition on the initial energy, we also investigate the upper bound estimates for the blow-up time.

\begin{theorem}\label{f-time1}
Let $u:V\times [0,T_{\rm max}(u_0))\rightarrow \mathbb{R}$ be a solution of \eqref{heatequation-1} with $u_0\geq 0$, where  $[0,T_{\rm max}(u_0))$ is
the maximum time interval that the solution exists. 
If $J(u_0)<0$, then
$$T_{\rm max}(u_0)\leq\frac{p+1}{(p-1)^2}|V|^{\frac{p-1}{2}}\|u_0\|^{1-p}_{L^{2}(V)};$$
While if
$$0\leq J(u_0)<c_3:=\frac{p-1}{2(p+1)}|V|^{\frac{1-p}{2}}\|u_0\|^{1+p}_{L^{2}(V)},$$
then
\begin{equation*}
T_{\max}(u_0)\leq\max\left\{\frac{(p+1)\left[\left(4(p+1)|V|^{\frac{p-1}{2}}J(u_0)\right)^{2/(p+1)}-(p-1)^{\frac{2}{p+1}}
\|u_0\|_{L^{2}(V)}^2\right]}
{(p-1)^{\frac{2}{p+1}}\left[2(p-1)|V|^{\frac{1-p}{2}}\|u_0\|^{1+p}_{L^{2}(V)}-4(p+1)J(u_0)\right]},
\frac{2(p+1)\|u_0\|^{1-p}_{L^{2}(V)}}{(p-1)^2|V|^{\frac{1-p}{2}}}
\right\}.
\end{equation*}
\end{theorem}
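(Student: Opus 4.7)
The plan is to reduce the problem to a first-order ODE inequality for
$F(t) := \|u(\cdot,t)\|_{L^{2}(V)}^{2}$
and extract blow-up bounds by comparison.

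\textbf{Step 1 (master inequality).} Testing \eqref{heatequation-1} against $u$ and summing over $V$ gives $F'(t) = 2\int_V u\,u_t\,d\mu = -2N(u(t))$. Using the identity $N(u) = 2J(u) - \frac{p-1}{p+1}\int_V|u|^{p+1}\,d\mu$ that follows immediately from \eqref{en-funct}--\eqref{N-def}, the energy decay $\frac{d}{dt}J(u(t)) = -\|u_t\|_{L^2(V)}^2 \le 0$ (so $J(u(t)) \le J(u_0)$), and the H\"older bound $\int_V |u|^{p+1}\,d\mu \ge |V|^{-(p-1)/2}F(t)^{(p+1)/2}$, I obtain
\[
F'(t) \;\ge\; B\,F(t)^{(p+1)/2} - 4J(u_0), \qquad B := \frac{2(p-1)}{p+1}|V|^{-(p-1)/2}.
\]

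\textbf{Step 2 (case $J(u_0)<0$).} The drift $-4J(u_0)$ is positive and may be discarded, leaving $F' \ge BF^{(p+1)/2}$. Setting $h(t) = F(t)^{-(p-1)/2}$ turns this into $h'(t) \le -\frac{(p-1)B}{2}$; since $h(0) = \|u_0\|_{L^2(V)}^{1-p}$, $h$ must reach zero, i.e.\ $F$ must blow up, no later than $\frac{2h(0)}{(p-1)B} = \frac{p+1}{(p-1)^2}|V|^{(p-1)/2}\|u_0\|_{L^2(V)}^{1-p}$, recovering the first stated bound.

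\textbf{Step 3 (case $0 \le J(u_0) < c_3$).} A short computation shows that $J(u_0)<c_3$ is equivalent to $BF(0)^{(p+1)/2} > 4J(u_0)$, i.e.\ $F'(0) > 0$. Since $x \mapsto Bx^{(p+1)/2} - 4J(u_0)$ is increasing and strictly positive at $x=F(0)$, a continuity argument forces $F$ to be strictly increasing on $[0,T_{\max})$. Introduce the threshold
\[
F_\star := \left(\frac{8J(u_0)}{B}\right)^{2/(p+1)},
\]
characterized by $BF_\star^{(p+1)/2} = 2\cdot 4J(u_0)$. If $F(0) \ge F_\star$, then $F \ge F_\star$ on $[0,T_{\max})$ and therefore $F' \ge \frac{B}{2}F^{(p+1)/2}$; rerunning Step~2 with $B$ replaced by $B/2$ yields $T_{\max} \le \frac{2(p+1)}{(p-1)^2}|V|^{(p-1)/2}\|u_0\|_{L^2(V)}^{1-p}$, which is the second entry of the maximum. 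If $F(0) < F_\star$, on the window $[F(0),F_\star]$ the monotonicity of $F$ and of $x \mapsto Bx^{(p+1)/2} - 4J(u_0)$ gives $F'(t) \ge BF(0)^{(p+1)/2} - 4J(u_0)$, so $F$ reaches $F_\star$ within time $(F_\star - F(0))/(BF(0)^{(p+1)/2} - 4J(u_0))$, which after substituting the explicit expressions for $B$ and $F_\star$ simplifies to the first entry of the maximum; the additional time needed after $F$ crosses $F_\star$ is bounded by the regime-(a) argument and is dominated by the same entry. In either regime $T_{\max}$ is controlled by the maximum of the two entries.

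\textbf{Main obstacle.} The principal difficulty is Step 3: the inhomogeneous drift $-4J(u_0)$ blocks a one-shot integration of the ODE inequality and forces the threshold decomposition at $F_\star$, with some care needed to package the two regime bounds into the maximum (rather than the sum) of the two entries. Verifying strict monotonicity of $F$ on $[0,T_{\max})$ is a technical prerequisite that allows the lower bound $F(t) \ge F(0)$ to be invoked freely in the super-threshold analysis.
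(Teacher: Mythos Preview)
Your approach is essentially identical to the paper's: both test the equation against $u$ to obtain the master inequality
\[
w'(t)\ \ge\ d_1\,w^{\alpha}(t)-d_0,\qquad w(t)=\|u(\cdot,t)\|_{L^2(V)}^2,\ \ d_0=4J(u_0),\ \ d_1=\tfrac{2(p-1)}{p+1}|V|^{\frac{1-p}{2}},\ \ \alpha=\tfrac{p+1}{2},
\]
then handle $d_0<0$ by direct integration of $w'\ge d_1w^{\alpha}$, and handle $0\le d_0<d_1w_0^{\alpha}$ by the same two-phase argument with threshold $F_\star=(2d_0/d_1)^{1/\alpha}$: linear growth $w'\ge d_1w_0^{\alpha}-d_0$ until $w$ reaches $F_\star$, then $w'\ge\tfrac{d_1}{2}w^{\alpha}$ afterwards.

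One point to correct in your Step~3: the assertion that, when $F(0)<F_\star$, the post-threshold time is ``dominated by the same [first] entry'' is false. As $F(0)\to F_\star^{-}$ the first entry $(F_\star-F(0))/(BF(0)^{(p+1)/2}-4J(u_0))$ tends to $0$, while the post-threshold bound $\tfrac{4}{(p-1)B}F_\star^{-(p-1)/2}$ does not. What the two-phase argument actually yields in this regime is $T_{\max}(u_0)\le(\text{first entry})+\tfrac{4}{(p-1)B}F_\star^{-(p-1)/2}$, and since $F_\star>F(0)$ the last term is bounded by the \emph{second} entry; so one gets at most the \emph{sum} of the two entries, not their maximum. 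The paper's proof is equally informal here (it simply records the max without further justification), so your proposal matches it faithfully; just be aware that the packaging into a max rather than a sum is not supported by the displayed estimates alone.
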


For the convenience of the readers, we summarize our main results in Table 1.

\begin{table}[H]
    \caption{Main results}
    \label{table:PPs}
    \vspace{3pt}  
    \small\centering
    \renewcommand\arraystretch{1.2} 
    \begin{tabular}{c l l c c l}  
        \toprule[1pt]
        Theorem  & Initial &  Existence & Asymptotic behaviour &Blow up  &  \\
        \midrule
        \ref{short-time}&   & Local existence &   &      &\\[3pt]
        \multirow{2}{*}{\ref{global}} &$||u_{0}||_{L^{\infty}(V)}<\delta$   &Global existence  & Exponential decay   &    &  \\[3pt]
        \multirow{2}{*}{} &$||u_{0}||_{L^{2}(V)}<\epsilon_{0}$   & Global existence  & Exponential decay      &   \\[3pt]
        \multirow{2}{*}{\ref{Well-thm}} &$u_{0}\in \mathscr{W}$   &Global existence  & Decay   &    &  \\[3pt]
        \multirow{2}{*}{} &$u_{0}\in \mathscr{S}$   &   &       &$\checkmark$ \\
        \multirow{2}{*}{\ref{finitetime}} &$u_{0}\geq 0$, $\int_{V}u_{0}d\mu>c_{1}$   &   &   &$\checkmark$    &  \\[3pt]
        \multirow{2}{*}{} &$u_{0}\geq 0$, $\int_{V}\varphi u_{0}d\mu>c_{2}$   &   &   &$\checkmark$    &   \\[5pt]
        \multirow{2}{*}{\ref{f-time1}} &$u_{0}\geq 0$, $J(u_{0})<0$   &   &   &$\checkmark$    &  \\[1pt]
        \multirow{2}{*}{} &$u_{0}\geq 0$, $0\leq J(u_{0})<c_{3}$   &   &   &$\checkmark$    &   \\[3pt]
        
       
        \bottomrule[1pt]
    \end{tabular}
\end{table}

Comparing with the existing literature, the main contributions of this paper are as follows.

(i) We generalize the classical results of nonlinear parabolic equations on the Euclidean space \cite{levine1974instability,fujita1966blowup,lee1992global,zhang2024blowup,quittner2019superlinear} to discrete graphs. Moreover, the equation \eqref{heatequation-1} with potential $a(x)\equiv 0$ considered in this work is more general and challenging than existing formulations on graphs. Specifically, if $a(x)\equiv 0$, problem (\ref{heatequation-1}) reduces to the case studied in references \cite{lin2017existence,lin2025blowup}.

(ii) We establish the connection between PDEs on graphs and complex dynamical networks. Based on this perspective, PDEs rather than ordinary differential systems can be employed to study complex dynamical networks. Since the nonlinear term $|u|^{p-1}u$ in \eqref{heatequation-1} does not satisfy the Lipschitz condition, the MSF approach used by Barahona and Pecora in \cite{barahona2002synchronization} is not applicable. We analyze the relationship between long-time existence, initial values, and network structure for \eqref{heatequation-1}, where the nonlinear term of the complex dynamical network does not satisfy the Lipschitz condition. Additionally, we explore the blow-up conditions of the network and provide estimates of the blow-up rate. As validation, we conduct corresponding numerical experiments, which verify the effectiveness of this approach.


\section{The existence of solutions }

In this section, we prove the existence and uniqueness of the short-time solutions to problem \eqref{heatequation-1} on a finite graph $G=(V,E)$. Additionally, under varying initial conditions, we establish the long-time existence of solutions and analyze their asymptotic behaviour. In preparation for the proof, we first establish the following comparison principle.

 \begin{lemma}\label{comparison-1}(Comparison principle)
If $u,v\in C^{1}(V\times [0,T])$ satisfy
\begin{equation*}
\left\{\begin{array}{lll}\partial_t u-\Delta u+au-|u|^{p-1}u\geq\partial_t v-\Delta v+av-|v|^{p-1}v,&&(x,t)\in V\times [0,T],\\[1.2ex]
u(x,0)\geq v(x,0),&&x\in V,
\end{array}\right.
\end{equation*}
there holds $u(x,t)\geq v(x,t)$ for all $(x,t)\in V\times[0,T]$, where $C^{1}(V\times [0,T])$ consists of all functions $u$ defined on $V\times [0,T]$ which satisfy $u(x,\cdot)\in C^{1}[0,T]$ for each $x\in V$.
\end{lemma}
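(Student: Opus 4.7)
The plan is to reduce to a linear differential inequality for the difference $w=u-v$ and then apply a discrete parabolic maximum principle. Subtracting the two inequalities gives
\[
\partial_t w-\Delta w+aw\geq |u|^{p-1}u-|v|^{p-1}v.
\]
Since $V$ is finite and $u,v\in C^1(V\times[0,T])$, both $u$ and $v$ are bounded on $V\times[0,T]$; hence by the mean value theorem applied to the $C^1$ function $s\mapsto|s|^{p-1}s$ (valid for $p>1$), I can write $|u|^{p-1}u-|v|^{p-1}v=c(x,t)(u-v)$ with a bounded coefficient $c(x,t)$. The inequality becomes $\partial_t w-\Delta w+(a-c)w\geq 0$ with $w(\cdot,0)\geq 0$.

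Next I would introduce the standard exponential shift, $w_\varepsilon(x,t)=w(x,t)+\varepsilon e^{Kt}$ for $\varepsilon>0$ and $K>\sup_{V\times[0,T]}|a-c|$. A direct computation shows
\[
\partial_t w_\varepsilon-\Delta w_\varepsilon+(a-c)w_\varepsilon\geq \varepsilon e^{Kt}\bigl(K+a-c\bigr)>0,
\]
while $w_\varepsilon(x,0)\geq \varepsilon>0$. Suppose for contradiction that $w_\varepsilon$ is not strictly positive on $V\times[0,T]$, and let
\[
t_0=\inf\Bigl\{t\in[0,T]:\ \min_{x\in V}w_\varepsilon(x,t)\leq 0\Bigr\}.
\]
By continuity in $t$, finiteness of $V$, and positivity at $t=0$, we have $t_0>0$, and there exists $x_0\in V$ with $w_\varepsilon(x_0,t_0)=0$ and $w_\varepsilon(y,t_0)\geq 0$ for all $y\in V$.

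At the node $(x_0,t_0)$, the three crucial sign conditions are: $\partial_t w_\varepsilon(x_0,t_0)\leq 0$ (the function leaves the positive region there), the discrete Laplacian
\[
\Delta w_\varepsilon(x_0,t_0)=\frac{1}{\mu(x_0)}\sum_{y\sim x_0}\omega_{x_0 y}\bigl(w_\varepsilon(y,t_0)-w_\varepsilon(x_0,t_0)\bigr)\geq 0,
\]
and $(a-c)(x_0,t_0)\,w_\varepsilon(x_0,t_0)=0$. Combining these gives $\partial_t w_\varepsilon-\Delta w_\varepsilon+(a-c)w_\varepsilon\leq 0$ at $(x_0,t_0)$, contradicting the strict lower bound $\varepsilon e^{Kt_0}(K+a-c)>0$ displayed above. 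Therefore $w_\varepsilon>0$ on $V\times[0,T]$ for every $\varepsilon>0$, and letting $\varepsilon\downarrow 0$ yields $w\geq 0$, i.e.\ $u\geq v$.

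The only delicate point I anticipate is justifying the linearization of the nonlinearity: one must know a priori that $u$ and $v$ stay bounded on $V\times[0,T]$ so that $c(x,t)=p|\xi(x,t)|^{p-1}$ is uniformly bounded, which is why the choice of $K$ in the exponential shift works. This is free here because $V$ is finite and the functions are assumed $C^1$ in $t$; on an infinite graph one would have to impose growth hypotheses at infinity. Everything else — the discrete Hopf-type inequality $\Delta w_\varepsilon(x_0,t_0)\geq 0$ at an interior minimum and the first-time contradiction argument — is standard once the linear inequality is set up.
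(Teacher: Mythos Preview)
Your argument is correct, but it follows a genuinely different route from the paper. You linearize the nonlinearity via the mean value theorem, introduce the barrier $w_\varepsilon=w+\varepsilon e^{Kt}$, and run a pointwise first-hitting-time contradiction at a spatial minimum. The paper instead sets $w=v-u$, works with the truncation $w_+=\max\{w,0\}$, and derives a Gronwall inequality for the energy $z(t)=\int_V w_+^2\,d\mu$: using $w_+\partial_t w_+=w_+\partial_t w$ almost everywhere, integration by parts, and the sign observation $-\int_V\nabla w_+\nabla w_-\,d\mu\le 0$, they get $z'(t)\le Az(t)$ with $A=2p\max_{V\times[0,T]}(|u|^{p-1}+|v|^{p-1})$, whence $z\equiv 0$ since $z(0)=0$. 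Your pointwise approach is arguably cleaner on a finite graph: it avoids the mild technicality of differentiating $w_+^2$ and the $\nabla w_+\nabla w_-$ sign computation, and it uses only that $V$ is finite and $u,v$ are $C^1$ in $t$. The paper's $L^2$ truncation method, on the other hand, is the Stampacchia-type argument one would reach for on an infinite graph or in the continuum, where a global pointwise minimum may not be attained; it also packages the Lipschitz estimate on $|u|^{p-1}u-|v|^{p-1}v$ directly rather than through an intermediate coefficient $c(x,t)$.
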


\begin{proof} Similar to \cite{chung2011extinction}, we set $w(x,t)=v(x,t)-u(x,t)$, $w_+(x,t)=\max\{w(x,t),0\}$ and
 $w_-(x,t)=\min\{w(x,t),0\}$ for all $(x,t)\in V\times[0,T]$. Obviously,
 $w_+(\cdot,t)$ is a Lipschitz function for $t\in [0,T]$, as well as the function
 $$z(t)=\int_Vw_+^2(\cdot,t)d\mu.$$
 For all $x\in V$ and a.e. $t\in[0,T]$, we have
 $$\partial_tw_+^2(x,t)=2w_+(x,t)\partial_tw_+(x,t)=2w_+(x,t)\partial_tw(x,t).$$
Since
$$\partial_tw-\Delta w+aw-(|v|^{p-1}v-|u|^{p-1}u)\leq 0$$
and
$$\left||v|^{p-1}v-|u|^{p-1}u\right|\leq p\left(|v|^{p-1}+|u|^{p-1}\right)|v-u|,$$
 we obtain for a.e. $t\in[0,T]$,
 \begin{align*}
 z^\prime(t)&=2\int_Vw_+(\cdot,t)\partial_tw(\cdot,t)d\mu\\
 &\leq 2\int_Vw_+(\Delta w-aw)d\mu+2\int_Vw_+\left||v|^{p-1}v-|u|^{p-1}u\right|d\mu\\
 &\leq -2\int_V\nabla w_+\nabla wd\mu-2\int_Vaw_+^2d\mu+2p\int_Vw_+(|v|^{p-1}+|u|^{p-1})|w|d\mu.
 \end{align*}
 Noting that $-2\int_Vaw_+^2d\mu\leq 0$, $w_+|w|=w_+^2$ and
 \begin{align*}
 -2\int_V\nabla w_+\nabla w_-d\mu&=-2\sum_{x\in V}\mu(x)\sum_{y\sim x}\frac{\omega_{xy}}{2\mu(x)}(w_+(y,t)-w_+(x,t))(w_-(y,t)-w_-(x,t))\\
 &=\sum_{x\in V}\sum_{y\sim x}{\omega_{xy}}(w_+(y,t)w_-(x,t)+w_+(x,t)w_-(y,t))\\
 &\leq 0,
 \end{align*}
 we calculate for a.e. $t\in[0,T]$,
 $
 z^\prime(t)\leq Az(t),
 $
 and thus
 $$\left(e^{-At}z(t)\right)^\prime=e^{-At}(z^\prime(t)-Az(t))\leq 0,$$
  where $A=2p\max_{V\times[0,T]}(|u|^{p-1}+|v|^{p-1})$. Since the Newton-Leibnitz formula still holds
  for Lipschitz functions, it then follows that for all $t\in[0,T]$,
  $$e^{-At}z(t)-z(0)=\int_0^t\left(e^{-As}z(s)\right)^\prime ds\leq 0.$$
  This together with the facts $z(t)\geq 0$ and
  $z(0)=\int_Vw^2_+(\cdot,0)d\mu=0$
  leads to $z(t)\equiv 0$ for all $t\in[0,T]$. Therefore $u(x,t)\geq v(x,t)$ for all
  $(x,t)\in V\times[0,T]$. 
  \end{proof}

The following corollary follows immediately from the lemma above.

\begin{corollary}\label{cor1}
Suppose that $u\in C^{1}(V\times [0,T])$ satisfies
\begin{equation}\label{heatequation-1a}
\left\{\begin{array}{lll}\partial_t u=\Delta u-au+|u|^{p-1}u,&&(x,t)\in V\times [0,T]\\[1.2ex]
u(x,0)=u_0(x)\geq 0,&&x\in V.
\end{array}\right.
\end{equation}
Then $u\geq 0$ in $V\times [0,T]$.
\end{corollary}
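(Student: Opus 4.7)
The plan is to deduce this corollary directly from the comparison principle (Lemma~\ref{comparison-1}) by choosing a trivial subsolution. Specifically, I would set $v(x,t)\equiv 0$ on $V\times[0,T]$ and check that the pair $(u,v)$ satisfies the hypotheses of Lemma~\ref{comparison-1}.

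First, I would verify the differential inequality. Since $v\equiv 0$, every term on the right-hand side of the inequality in Lemma~\ref{comparison-1} vanishes:
\begin{equation*}
\partial_t v-\Delta v+av-|v|^{p-1}v=0.
\end{equation*}
On the other hand, $u$ satisfies the equation \eqref{heatequation-1a} with equality, so
\begin{equation*}
\partial_t u-\Delta u+au-|u|^{p-1}u=0\geq 0=\partial_t v-\Delta v+av-|v|^{p-1}v
\end{equation*}
on $V\times[0,T]$. The initial condition is likewise immediate: $u(x,0)=u_0(x)\geq 0=v(x,0)$ for all $x\in V$.

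Having checked both hypotheses, I would invoke Lemma~\ref{comparison-1} to conclude $u(x,t)\geq v(x,t)=0$ for all $(x,t)\in V\times[0,T]$, which is exactly the claim. There is no real obstacle here, since the corollary is essentially a restatement of the comparison principle against the zero solution; the only thing to note is that constants (in particular the zero function) are admissible comparison functions because the nonlinearity $|s|^{p-1}s$ vanishes at $s=0$, so no extra regularity or smallness assumption on $u$ is required beyond what is already built into Lemma~\ref{comparison-1}.
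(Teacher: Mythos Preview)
Your proof is correct and is exactly what the paper intends: the corollary is stated as following immediately from Lemma~\ref{comparison-1}, and applying that lemma with $v\equiv 0$ is the intended argument.
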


At this stage, we are ready to prove Theorem \ref{short-time}.
\vspace{0.2cm}

{\emph{Proof of Theorem \ref{short-time}}.} Since $V=\{x_1,x_2,\cdots,x_N\}$, we can identify a function $u:V\times [0,+\infty)\rightarrow \mathbb{R}$
with a column vector $\bf{y}\in\mathbb{R}^N$, say ${\bf y}=(u(x_1,t),u(x_2,t),\cdots,u(x_N,t))^\top$. Define a map
$F:\mathbb{R}^N\rightarrow\mathbb{R}^N$ by
$$F({\bf y})=(F_1({\bf y}),F_2({\bf y}),\cdots,F_N({\bf y})),$$
where for each $j=1,2,\cdots,N$,
$$F_j({\bf y})=\Delta u(x_j,t)+|u(x_j,t)|^{p-1}u(x_j,t)-a(x_j)u(x_j,t).$$
As a consequence, the problem \eqref{heatequation-1} is thereby transformed into a system of ordinary differential equations
\begin{equation}\label{system}
\left\{\begin{array}{lll}{\bf y}^\prime(t)=F({\bf y}(t))\\[1.2ex]
{\bf y}(0)={\bf y}_0
\end{array}\right.\end{equation}
with the initial value is a vector ${\bf y}_0=(u_0(x_1),u_0(x_2),\cdots,u_0(x_N))^\top$. Since $p>1$, we have that
$F:\mathbb{R}^N\rightarrow\mathbb{R}^N$ is a locally Lipschitz map. According to the theory of ordinary differential equations (Section 6.1.1 in \cite{wang2006ordinary}), there exists 
$$T_0=\frac{1}{\max_{|{\bf y}-{\bf y}_0|\leq 1}|F({\bf y})|}>0$$
such that the system (\ref{system}) has a unique solution ${\bf y}(t)$ on the interval $[0,T_0]$. Here and in the sequel,
we denote the length of a vector in $\mathbb{R}^N$ by $|\cdot|$. It then follows that
equation (\ref{heatequation-1}) has a unique solution $u(x,t)$ on $V\times[0,T_0]$.
Moreover, if $u_0(x)\geq 0$ for all $x\in V$, then Lemma \ref{comparison-1}
implies that $u(x,t)\geq 0$ for all $(x,t)\in V\times[0,T_0]$.
$\hfill\Box$

\vspace{0.2cm}
Next, we introduce a key lemma that will be used in the proof of Theorem~\ref{global}.

\begin{lemma}\label{lambda}
The first eigenvalue $\lambda_a$ of the operator $-\Delta+a$ is positive and there exists an eigenfunction
$\varphi$ such that
$$\left\{\begin{array}{lll}-\Delta \varphi+a\varphi=\lambda_a\varphi\\[1.2ex]
\varphi(x)>0,\quad\,\,\forall x\in V.\end{array}\right.$$
\end{lemma}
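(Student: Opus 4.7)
The plan is to use the standard Rayleigh quotient characterization
\[
\lambda_a \;=\; \inf_{u\not\equiv 0}\frac{\int_V(|\nabla u|^2+au^2)\,d\mu}{\int_V u^2\,d\mu},
\]
exploit the finiteness of $V$ to get a minimizer, and then run a Perron-Frobenius / maximum principle argument to upgrade the minimizer to a strictly positive eigenfunction. First, I would note that since $V=\{x_1,\dots,x_N\}$, the functional space is finite-dimensional, and the Rayleigh quotient is continuous on the compact set $\{u:\|u\|_{L^2(V)}=1\}\subset\mathbb{R}^N$, so a minimizer $\varphi$ exists and the Lagrange multiplier computation gives $-\Delta\varphi+a\varphi=\lambda_a\varphi$.

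Next, I would establish $\lambda_a>0$. Since $a\geq 0$ we already have $\lambda_a\geq 0$. If $\lambda_a=0$, then the minimizer $\varphi\not\equiv 0$ satisfies $\int_V|\nabla\varphi|^2 d\mu=0$ and $\int_V a\varphi^2\,d\mu=0$. The first equation forces $\varphi(y)=\varphi(x)$ whenever $y\sim x$, and since $G$ is connected $\varphi$ is constant. Combined with $a\not\equiv 0$, the second equation then forces that constant to be zero, contradicting $\|\varphi\|_{L^2(V)}=1$. Hence $\lambda_a>0$.

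For positivity of the eigenfunction, I would use the pointwise inequality $(|u(y)|-|u(x)|)^2\leq (u(y)-u(x))^2$, which gives $|\nabla|\varphi|\,|^2\leq|\nabla\varphi|^2$ on $V$, while $a|\varphi|^2=a\varphi^2$ and $\||\varphi|\|_{L^2(V)}=\|\varphi\|_{L^2(V)}$. Thus $|\varphi|$ is also a minimizer and solves the same eigenvalue equation, so I may assume $\varphi\geq 0$ from the outset. Suppose for contradiction that $\varphi(x_0)=0$ at some $x_0\in V$. Evaluating $-\Delta\varphi+a\varphi=\lambda_a\varphi$ at $x_0$ yields
\[
\frac{1}{\mu(x_0)}\sum_{y\sim x_0}\omega_{x_0y}\varphi(y)\;=\;0.
\]
Since $\omega_{x_0y}>0$ and $\varphi\geq 0$, this forces $\varphi(y)=0$ for every neighbour $y$ of $x_0$. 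Iterating this argument along paths and invoking the connectedness of $G$ yields $\varphi\equiv 0$, contradicting $\|\varphi\|_{L^2(V)}=1$. Therefore $\varphi(x)>0$ for all $x\in V$, which completes the proof.

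The main obstacle, if any, is the positivity step: one must be careful that the substitution $\varphi\mapsto|\varphi|$ is admissible (which is where the graph-level inequality $|\nabla|u|\,|\leq|\nabla u|$ is used) and then propagate positivity through the graph via connectedness rather than through any smooth strong maximum principle. Everything else is essentially compactness plus the hypothesis $a\not\equiv 0$.
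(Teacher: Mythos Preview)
Your proof is correct and follows essentially the same route as the paper's: Rayleigh quotient minimization (via finite-dimensional compactness) for existence of the eigenfunction and positivity of $\lambda_a$, followed by the same connectedness propagation argument to upgrade $\varphi\geq 0$ to $\varphi>0$. The only cosmetic difference is that the paper restricts the Rayleigh minimization to $u\geq 0$ from the outset, whereas you minimize over all $u$ and then pass to $|\varphi|$ via the inequality $|\nabla|u|\,|\leq|\nabla u|$; the two are equivalent.
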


\begin{proof}
  
Since
$$\lambda_a=\inf_{u\geq 0,\,u\not\equiv 0}\frac{\int_V(|\nabla u|^2+au^2)d\mu}{\int_Vu^2d\mu}=
\inf_{\int_Vu^2d\mu=1,\,u\geq 0}\int_V(|\nabla u|^2+au^2)d\mu,$$
we can take a minimizing sequence $\{u_k\}$ such that $\int_Vu_k^2=1$, $u_k\geq 0$ and
$$\int_V(|\nabla u_k|^2+au_k^2)d\mu\rightarrow \lambda_a$$
as $k\rightarrow +\infty$. Clearly, $\{u_k\}$ is uniformly bounded in $V$. Up to a subsequence (still denote by $\{u_k\}$), there exists a function
$\varphi : V\rightarrow \mathbb{R}$ such that $u_k$ uniformly converges to $\varphi$. This leads to $\int_V \varphi^2d\mu=1$, $\varphi\geq 0$ and
$$\lambda_a=\int_V(|\nabla \varphi|^2+a\varphi^2)d\mu>0.$$
As a consequence, $\varphi$ satisfies the Euler-Lagrange equation
$$-\Delta \varphi+a\varphi=\lambda_a\varphi,\quad\forall x\in V.$$
It remains to prove that $\varphi>0$ on $V$. Suppose not, since $\varphi$ is nonnegative, we may assume $\varphi(x_0)=0=\min_V\varphi$ for some $x_0\in V$. Thus
 \begin{eqnarray*}
 0=\lambda_a\varphi(x_0)=-\Delta \varphi(x_0)+a(x_0)\varphi(x_0)=-\Delta \varphi(x_0)\leq 0,
 \end{eqnarray*}
 which implies $\varphi(y)=0$ for all $y\sim x_0$. Since $G=(V,E)$ is finite and connected, repeating this process for finite times, we conclude $\varphi(x)=0$ for all $x\in V$, which contradicts $\int_Vu^2d\mu=1$. Therefore
 $\varphi(x)>0$ for all $x\in V$.
\end{proof}

At the end of this section, we prove Theorem~\ref{global}.
\vspace{0.2cm}

{\emph{Proof of Theorem \ref{global}}.} 
 $(i)$ By Lemma~\ref{lambda}, $\lambda_a>0$ and there exists an eigenfunction
$\varphi$ with $\varphi(x)>0$ for all $x\in V$.
Given any initial data $u_0$ with $\|u_0\|_{L^\infty(V)}<\delta$, where $\delta>0$ is a small number to be determined later. If $u_0\equiv 0$ on $V$, then (\ref{heatequation-1}) has a unique solution
$u(x,t)\equiv 0$ on $V\times[0,+\infty)$, and the assertion $(i)$ already holds. We next consider the case $u_0\not\equiv 0$. To proceed, we set for some positive number
$\sigma\in(0,\lambda_a)$,
$$v(x,t)=\frac{2\|u_0\|_{L^\infty(V)}}{\min_{x\in V}\varphi}e^{-\sigma t}\varphi(x).$$
Taking $\delta$ satisfying
$$\left(2\delta\frac{\max_{x\in V}\varphi}{\min_{x\in V}\varphi}\right)^{p-1}\leq \lambda_a-\sigma,$$
we have for all $(x,t)\in V\times[0,+\infty)$,
\begin{equation}\label{vxt}0\leq v(x,t)\leq (\lambda_a-\sigma)^{\frac{1}{p-1}}.\end{equation}
Clearly $v$ satisfies the equation
$$\left\{\begin{array}{lll}
\partial_tv-\Delta v+av=(\lambda_a-\sigma)v\\[1.2ex]
v(x,0)=\frac{2\|u_0\|_{L^\infty(V)}}{\min_V\varphi}\varphi(x).
\end{array}\right.$$
In view of (\ref{vxt}), we have $(\lambda_a-\sigma)v\geq v^p$. Thus on $V\times[0,+\infty)$,
$$\partial_tv-\Delta v+av\geq v^p$$
and on $V\times[0,T_{\rm max}(u_0))$,
\begin{equation}\label{com-2}\partial_tv-\Delta v+av-v^p\geq \partial_tu-\Delta u+au-|u|^{p-1}u.\end{equation}
Note that for all $x\in V$,
\begin{equation}\label{initial}v(x,0)>u_0(x)\end{equation}
and
\begin{equation}\label{initial-2}v(x,0)>-u_0(x). \end{equation}
Note also that $-u$ is also a solution of (\ref{heatequation-1}) with the initial data $-u_0$.
We now {\it claim} that $T_{\rm max}(u_0)=+\infty$. If not, we have $T_{\rm max}(u_0)<+\infty$. In view of (\ref{com-2}) and
(\ref{initial}),
we have by the comparison principle (Lemma \ref{comparison-1}), $v(x,t)\geq u(x,t)$ for all $(x,t)\in V\times[0,T_{\rm max}(u_0))$. While (\ref{com-2}),
(\ref{initial-2}) and the comparison principle imply $v(x,t)\geq -u(x,t)$ for all $(x,t)\in V\times[0,T_{\rm max}(u_0))$.
It then follows that for all $(x,t)\in V\times[0,T_{\rm max}(u_0))$,
$$|u(x,t)|\leq v(x,t)=\frac{2\|u_0\|_{L^\infty(V)}}{\min_V\varphi}e^{-\sigma t}\varphi(x)\leq C\|u_0\|_{L^\infty(V)} e^{-\sigma t}.$$
By the ODE theory (\cite{wang2006ordinary}, Chapter 6), the solution $u$ can be extended to $V\times [0,T_1)$ for some
$T_1>T_{\rm max}(u_0)$, contradicting the definition of $T_{\rm max}(u_0)$. This confirms our claim $T_{\rm max}(u_0)=+\infty$ and
$$\|u(\cdot,t)\|_{L^\infty(V)}\leq C\|u_0\|_{L^\infty(V)} e^{-\sigma t}$$
for all $t\in[0,+\infty)$. This completes the proof of the first assertion.

\vspace{0.2cm}
$(ii)$ We first prove that there exists two positive numbers $\epsilon$ and $\theta$, depending only on the graph $G=(V,E)$ and $p$, such that if $\|u_0\|_{L^2(V)}<\epsilon$, we
have \begin{equation}\label{eq-4}\|u(\cdot,t)\|_{L^2(V)}\leq \|u_0\|_{L^2(V)}e^{-\theta t},\quad\forall t\in[0,T_{\rm max}(u_0)).\end{equation}

To see this, multiplying both sides of (\ref{heatequation-1}) by $u$, we have by integration by parts
\begin{align*}
\frac{d}{dt}\int_Vu^2(\cdot,t)d\mu&=-2\int_V(|\nabla u|^2+au^2)d\mu+2\int_V|u|^{p+1}d\mu\\
&\leq-2\lambda_a\int_Vu^2(\cdot,t)d\mu+\frac{2|V|}{\mu_{\rm min}^{\frac{p+1}{2}}}\left(\int_Vu^2(\cdot,t)d\mu\right)
^{\frac{p+1}{2}},
\end{align*}
where $\mu_{\rm min}=\min_{x\in V}\mu(x)$ and we have use the fact that
$$\|u(\cdot,t)\|_{L^\infty(V)}\leq \frac{1}{\sqrt{\mu_{\rm min}}}\left(\int_Vu^2(\cdot,t)d\mu\right)^{1/2}.$$
This leads to
$$\left\{
\begin{array}{lll}
z^\prime(t)\leq-2\lambda_az(t)+\frac{2|V|}{\mu_{\rm min}^{\frac{p+1}{2}}}z^{\frac{p+1}{2}}(t),\\[1.2ex]
z(t)=\int_Vu^2(\cdot,t)d\mu,\quad t\in[0,T_{\rm max}(u_0)),\\[1.2ex]
z(0)=\int_Vu_0^2d\mu.
\end{array}
\right.$$
  Suppose that $\|u_0\|_{L^2(V)}<\epsilon$, where $\epsilon$ is a small positive number to be determined later. Set
  $$t_\epsilon=\sup\left\{t: \int_Vu^2(\cdot,\tau)d\mu<\epsilon^2,\,\forall \tau\in[0,t]\subset[0,T)\right\}.$$
  Obviously $t_\epsilon>0$. If we choose $\epsilon$ such that
  $$0<\epsilon\leq \epsilon^{\frac{1}{2}}_{0}=\left(\frac{\lambda_a\mu_{\rm min}^{\frac{p+1}{2}}}{2|V|}\right)^{1/(p-1)},$$
here $\epsilon_{0}$ is given in Theorem~\ref{global} (ii),  then it follows that
  \begin{align*}
  z^\prime(t)&\leq -2\left(\lambda_a-\frac{|V|}{\mu_{\rm min}^{\frac{p+1}{2}}}\epsilon^{{p-1}}\right)z(t)\\
  &\leq-\lambda_a z(t)
  \end{align*}
  and
  \begin{equation}\label{eq-3}z(t)\leq z(0)e^{-\lambda_a t},\quad\forall t\in [0,t_\epsilon).\end{equation}
  Now we {\it claim} that $t_\epsilon=T_{\rm max}(u_0)$.
  For otherwise, in view of the definition of $t_\epsilon$, there must hold $0<t_\epsilon<T_{\rm max}(u_0)$ and $z(t_\epsilon)=\epsilon^2$. Noticing
  (\ref{eq-3}), one has
  $$\epsilon^2=z(t_\epsilon)\leq z(0)e^{-\lambda_at_\epsilon}\leq z(0)<\epsilon^2,$$
  which is impossible. Hence our claim follows. Furthermore, the fact $t_\epsilon=T_{\rm max}(u_0)$ together with (\ref{eq-3}) implies (\ref{eq-4})
  with $\theta=\lambda_a/2$, as we desired.

  To sum up, we come to the conclusion
  $$\|u(\cdot,t)\|_{L^2(V)}\leq \|u_0\|_{L^2(V)}e^{-\frac{\lambda_a}{2} t},\quad\forall t\in[0,T_{\rm max}(u_0)).$$
  By the ODE theory (\cite{wang2006ordinary}, Chapter 6), $u(x,t)$ can be extended to $V\times[0,+\infty)$. Moreover,
  $$\|u(\cdot,t)\|_{L^\infty(V)}\leq \frac{1}{\sqrt{\mu_{\rm min}}}\|u(\cdot,t)\|_{L^2(V)}\leq \frac{1}{\sqrt{\mu_{\rm min}}}\|u_0\|_{L^2(V)}e^{-\frac{\lambda_a}{2}t}$$
  for all $t\in[0,+\infty)$.
  This ends the proof of $(ii)$, and thus completes the proof of the theorem.
$\hfill\Box$

 \section{The potential well theory}

In this section, we investigate the long-time existence and the blow-up behaviour for problem \eqref{heatequation-1} using the potential well method. First, we present some properties of the potential well.

\begin{lemma}\label{depth}
  Let 
  \begin{equation}\label{LAM}\Lambda=\inf\left\{\frac{\int_V(|\nabla u|^2+au^2)d\mu}{\left(\int_V|u|^{p+1}d\mu\right)^{2/(p+1)}}:
  u\in W^{1,2}(V)\setminus\{0\}\right\}.\end{equation}
  Then there hold the following three assertions:\\
  $(i)$ The depth $\emph{{\textbf{r}}}$ of the potential well $\mathscr{W}$ is attained by some function $u\in W^{1,2}(V)\setminus\{0\}$
  satisfying $N(u)=0$, moreover it is uniquely determined by $p$ and $\Lambda$, namely
  \begin{equation}\label{r}{\emph{\textbf{r}}}=\frac{p-1}{2(p+1)}\Lambda^{\frac{p+1}{p-1}},
  \end{equation}
where  $\mathscr{W}$ and $\emph{\textbf{r}}$ are defined in \eqref{W-def} and \eqref{r-def}.
  
  $(ii)$ For any $\epsilon>0$, we have
  \begin{equation}\label{d-e}
  \emph{\textbf{r}}_\epsilon=\inf\{J(u): u\in W^{1,2}(V),\,N(u)=-\epsilon\}\geq {\textbf{r}}-\frac{\epsilon}{p+1}.
  \end{equation}
  (iii) For any $u\in W^{1,2}(V)$, if $\|u\|_{1,a}<\sqrt{2\emph{\textbf{r}}}$, then $u\in\mathscr{W}$;
  while if $u\in\mathscr{W}$, then
  $$\|u\|_{1,a}<\sqrt{\frac{2(p+1)\emph{\textbf{r}}}{p-1}},$$
  where $\|u\|_{1,a}=(\int_V(|\nabla u|^2+au^2)d\mu)^{1/2}$.
  \end{lemma}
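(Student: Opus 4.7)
The plan rests on one identity and one family of one-parameter scalings. The identity
\[
J(u)=\frac{p-1}{2(p+1)}\|u\|_{1,a}^2+\frac{1}{p+1}N(u)
\]
is immediate from the definitions \eqref{en-funct}--\eqref{N-def}; the scalings are the maps $t\mapsto N(tu)=t^2\|u\|_{1,a}^2-t^{p+1}\int_V|u|^{p+1}d\mu$, which for each fixed $u\not\equiv 0$ have a unique positive root
\[
t^*(u)=\Bigl(\|u\|_{1,a}^{2}\big/{\textstyle\int_V}|u|^{p+1}d\mu\Bigr)^{1/(p-1)}.
\]

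For part (i), the identity shows that on the Nehari set $\{N(u)=0\}$ one has $J(u)=\frac{p-1}{2(p+1)}\|u\|_{1,a}^2$, so computing $\textbf{r}$ amounts to minimising $\|u\|_{1,a}^2$ subject to $N(u)=0$. Replacing an arbitrary $v\neq 0$ by $t^*(v)v$, a short computation gives
\[
\|t^*(v)v\|_{1,a}^2=\Bigl(\|v\|_{1,a}^2\big/\|v\|_{L^{p+1}(V)}^2\Bigr)^{(p+1)/(p-1)}.
\]
Taking the infimum over $v\neq 0$ turns the right-hand side into $\Lambda^{(p+1)/(p-1)}$, which gives formula \eqref{r}. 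Attainment of the infimum is automatic: $V$ is finite, so $W^{1,2}(V)$ is finite-dimensional and the quotient defining $\Lambda$ in \eqref{LAM} is continuous and zero-homogeneous on a compact unit sphere.

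For part (ii), given $u$ with $N(u)=-\epsilon<0$, the map $t\mapsto N(tu)$ is positive near $t=0$ and negative at $t=1$, so $t^*(u)\in(0,1)$. Applying (i) to $t^*(u)u$ yields $\|t^*(u)u\|_{1,a}^2\geq 2(p+1)\textbf{r}/(p-1)$, and since $t^*(u)<1$ this passes to $\|u\|_{1,a}^2\geq 2(p+1)\textbf{r}/(p-1)$. Substituting into the identity with $N(u)=-\epsilon$ produces $J(u)\geq \textbf{r}-\epsilon/(p+1)$. For part (iii), if $u\in\mathscr{W}\setminus\{0\}$ then $N(u)>0$ and the identity immediately gives $\frac{p-1}{2(p+1)}\|u\|_{1,a}^2<J(u)<\textbf{r}$, hence the upper bound on $\|u\|_{1,a}$. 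Conversely, the definition of $\Lambda$ delivers the Sobolev-type bound $\int_V|u|^{p+1}d\mu\leq \Lambda^{-(p+1)/2}\|u\|_{1,a}^{p+1}$, so $N(u)\geq \|u\|_{1,a}^2\bigl(1-\Lambda^{-(p+1)/2}\|u\|_{1,a}^{p-1}\bigr)$; combined with \eqref{r}, the hypothesis $\|u\|_{1,a}<\sqrt{2\textbf{r}}$ forces $\|u\|_{1,a}^{p-1}<\Lambda^{(p+1)/2}$, hence $N(u)>0$, while $J(u)\leq \|u\|_{1,a}^2/2<\textbf{r}$ finishes $u\in\mathscr{W}$.

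The main obstacle is the bookkeeping in (i): one must track the exponents in the scaling $v\mapsto t^*(v)v$ carefully to recognise the minimum over the Nehari manifold as $\Lambda^{(p+1)/(p-1)}$ and to read off the explicit constant $\frac{p-1}{2(p+1)}$. Everything else is obtained by feeding the identity $J=\frac{p-1}{2(p+1)}\|\cdot\|_{1,a}^2+\frac{1}{p+1}N$ together with the single-ray scaling $t\mapsto tu$ into the hypotheses of (ii) and (iii).
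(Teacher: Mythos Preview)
Your proof is correct and follows essentially the same approach as the paper: both rely on the identity $J(u)=\frac{p-1}{2(p+1)}\|u\|_{1,a}^2+\frac{1}{p+1}N(u)$, the Sobolev-type inequality $\int_V|u|^{p+1}d\mu\leq\Lambda^{-(p+1)/2}\|u\|_{1,a}^{p+1}$, and scaling onto the Nehari set. Your organization is slightly cleaner in that you package (i) via the single formula $\|t^*(v)v\|_{1,a}^2=(\|v\|_{1,a}^2/\|v\|_{L^{p+1}}^2)^{(p+1)/(p-1)}$ and then derive (ii) by projecting onto the Nehari set and invoking (i), whereas the paper proves the lower bound for $\mathbf r$ and the estimate for $\mathbf r_\epsilon$ simultaneously from the Sobolev inequality and then handles the upper bound and attainment by separate minimizing-sequence arguments.
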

  \proof Firstly we prove $(i)$ and $(ii)$. By a direct method of variation, one can easily see that $\Lambda$ is attained by some function $u_\Lambda\not\equiv 0$,
  and thus $\Lambda>0$. Let $\epsilon\geq 0$ be fixed. We first {\it claim} that
  \begin{equation}\label{set}\left\{v\in W^{1,2}(V)\setminus\{0\}: N(v)=-\epsilon\right\}\not=\varnothing.\end{equation}
  To see this, for any $v\not\equiv 0$, we set a function $\phi(s)=N(sv)$. Since $\phi(0)=0$ and
  $$\phi(s)=s^2\int_V(|\nabla v|^2+av^2)d\mu-s^{p+1}\int_V|v|^{p+1}d\mu\rightarrow-\infty$$
  as $s\rightarrow+\infty$, there should exist some $s\in(0,+\infty)$ such that $\phi(s)=-\epsilon$ if $\epsilon>0$. In the case
  $\epsilon=0$, since $\phi(s_0)>0$ for some $0<s_0<1$, we can take some $s_1>t_0$ such that $\phi(s_1)=0$. This concludes our claim
  (\ref{set}). Now taking any $u$ in the set (\ref{set}), we have
  $$J(u)=\left(\frac{1}{2}-\frac{1}{p+1}\right)\int_V|u|^{p+1}d\mu-\frac{\epsilon}{2}.$$
  From (\ref{LAM}) and $N(u)\leq 0$, it follows that
  $$\int_V(|\nabla u|^2+au^2)d\mu\leq \int_V|u|^{p+1}d\mu\leq \Lambda^{-\frac{p+1}{2}}\left(\int_V(|\nabla u|^2+au^2)d\mu\right)^{\frac{p+1}{2}}$$
  and that
  $$\int_V(|\nabla u|^2+au^2)d\mu\geq \Lambda^{\frac{p+1}{p-1}}.$$
  Hence
  \begin{align}\nonumber
  J(u)&=\frac{1}{2}\|u\|_{1,a}^2-\frac{1}{p+1}\int_V|u|^{p+1}d\mu\\\nonumber
  &=\left(\frac{1}{2}-\frac{1}{p+1}\right)\|u\|_{1,a}^2-\frac{\epsilon}{p+1}\\
  &\geq\frac{p-1}{2(p+1)}\Lambda^{\frac{p+1}{p-1}}-\frac{\epsilon}{p+1}.\label{r-c}
  \end{align}
  Recall the definition of $\textbf{r}$, namely (\ref{r-def}). On one hand, in the case $\epsilon=0$, it follows from (\ref{r-c}) that
  \begin{equation}\label{r-g}\textbf{r}\geq \frac{p-1}{2(p+1)}\Lambda^{\frac{p+1}{p-1}};\end{equation}
  while in the case $\epsilon>0$, one has
  $$\textbf{r}_\epsilon\geq \frac{p-1}{2(p+1)}\Lambda^{\frac{p+1}{p-1}}-\frac{\epsilon}{p+1},$$
  which is exactly (\ref{d-e}). On the other hand, we can take a minimizing sequence $\{u_k\}$ for \eqref{LAM}, namely $u_k\not\equiv 0$ and
  \begin{equation}\label{Sobolev}
  	\frac{\|u_k\|_{1,a}^2}{\|u_k\|_{L^{p+1}(V)}^2}=\Lambda+o_k(1),
  \end{equation}
  where $o_k(1)\rightarrow 0$ as $k\rightarrow+\infty$. Using the same argument of proving (\ref{set}), one may find some $s_k>0$ such that
  $N(s_k u_k)=0$ for each $k$. Since $s_k u_k$ also satisfies (\ref{Sobolev}), one may assume without loss of generality that $N(u_k)=0$. As a consequence,
  $$\|u_k\|_{1,a}^2=\|u_k\|_{L^{p+1}(V)}^{p+1}=(\Lambda+o_k(1))^{-\frac{p+1}{2}}\|u_k\|_{1,a}^{p+1}$$
  and
  $$
  J(u_k)=\left(\frac{1}{2}-\frac{1}{p+1}\right)\|u_k\|_{1,a}^2=\frac{p-1}{2(p+1)}\Lambda^{\frac{p+1}{p-1}}+o_k(1).
  $$
  This implies $\textbf{r}\leq \frac{p-1}{2(p+1)}\Lambda^{\frac{p+1}{p-1}}$, which together with (\ref{r-g}) leads to (\ref{r}).

\vspace{4pt}
  Now we prove that as an infimum in (\ref{r-def}), $\textbf{r}$ can be attained. Similarly as above, we take a function sequence $\{v_k\}\subset W^{1,2}(V)\setminus\{0\}$ satisfying
  $N(v_k)=0$ and $J(v_k)=\textbf{r}+o_k(1)$, or equivalently
  $$
  \textbf{r}+o_k(1)=\frac{1}{2}\|v_k\|_{1,a}^2-\frac{1}{p+1}\|v_k\|_{L^{p+1}(V)}^{p+1}=\frac{p-1}{2(p+1)}\|v_k\|_{1,a}^2.
  $$
  Thus $\{v_k\}$ is uniformly bounded in $V$. Up to a subsequence, $\{v_k\}$ converges to some $v$ uniformly in $V$. Therefore $N(v)=0$ and $J(v)=\textbf{r}$. Since $\textbf{r}>0$, there holds $v\not\equiv 0$.
  Thus $\textbf{r}$ is attained.\\

  Now we prove $(iii)$. If $\|u\|_{1,a}<\sqrt{2\textbf{r}}$, then either $u\equiv 0$, or $u\not\equiv 0$ and
  $$J(u)=\frac{1}{2}\|u\|_{1,a}^2-\frac{1}{p+1}\int_V|u|^{p+1}d\mu\leq \frac{1}{2}\|u\|_{1,a}^2<\textbf{r}.$$
  In the case $u\not\equiv 0$, we have by combining (\ref{LAM}) and (\ref{r}),
  $$\int_V|u|^{p+1}d\mu\leq \Lambda^{-\frac{p+1}{2}}\|u\|_{1,a}^{p+1}\leq \Lambda^{-\frac{p+1}{2}}(2\textbf{r})^{\frac{p-1}{2}}
  \|u\|_{1,a}^2<\|u\|_{1,a}^2,$$
  which implies $u\in\mathscr{W}$.

  On the other hand, if $u\in\mathscr{W}$, then
  \begin{eqnarray*}
  \left(\frac{1}{2}-\frac{1}{p+1}\right)\|u\|_{1,a}^2<J(u)<\textbf{r}.
  \end{eqnarray*}
  This leads to $\|u\|_{1,a}<\sqrt{2(p+1)\textbf{r}/(p-1)}$, as we desired. $\hfill\Box$\\


Next, we give the proof of Theorem~\ref{Well-thm}.
\vspace{0.2cm}

 {\emph{Proof of Theorem \ref{Well-thm}}.} $(i)$ Suppose $u_0\in\mathscr{W}$. We have either $u_0\equiv 0$, or $J(u_0)<\textbf{r}$ and $N(u_0)>0$. We employ a case analysis with three possibilities
  
\vspace{0.2cm}
  {\it Case $1$.} $u_0\equiv 0$. In this case, the uniqueness theorem for system implies $u(\cdot,t)\equiv 0$ for all $t\in [0,+\infty)$.
  Thus the conclusion is true.
  
\vspace{0.2cm}
  {\it Case $2$.} There exists some $t_0>0$ such that $u(\cdot,t_0)\equiv 0$ and $u(\cdot,t)\not\equiv 0$ for all $t\in[0,t_0)$.
\vspace{0.2cm}

   The same reason as in Case 1 leads to $u(\cdot,t)\equiv 0$  for all $t\geq t_0$. In particular $T_{\rm max}(u_0)=+\infty$,
   $u(\cdot,t)\in\mathscr{W}$ for all $t\in[t_0,+\infty)$ and
   $\|u(\cdot,t)\|_{L^\infty(V)}\rightarrow 0$ as $t\rightarrow +\infty$. We also need to consider the situation on the interval $[0,t_0)$.
    In view of (\ref{heatequation-1}) and (\ref{en-funct}), we have
  \begin{align}\nonumber
  \frac{d}{dt}J(u(\cdot,t))&=\int_V(\nabla u\nabla u_t+auu_t-|u|^{p-1}uu_t)d\mu\\\nonumber
  &=\int_V(-\Delta u+au-|u|^{p-1}u)u_td\mu\\\label{derivative}
  &=-\int_Vu_t^2d\mu.
  \end{align}
  It follows that $J(u(\cdot,t))$ is decreasing in $t\in[0,t_0)$, in particular $J(u(\cdot,t))\leq J(u_0)<\textbf{r}$
  for all $t\in[0,t_0)$. Since $N(u_0)>0$ and $N(u(\cdot,t))$ is continuous in $t$, there holds $N(u(\cdot,t))>0$ if
  $t$ is sufficiently close to $0$. Now we {\it claim} that $N(u(\cdot,t))>0$ for all $t\in[0,t_0)$. For otherwise, there would
  be a $t_1\in(0,t_0)$ satisfying $N(u(\cdot,t_1))=0$, which together with (\ref{r-def}) gives $J(u(\cdot,t_1))\geq \textbf{r}$,
  which is a contradiction. Hence our claim follows, and thus $u(\cdot,t)\in\mathscr{W}$ for all $t\in[0,t_0)$, as we desired.

\vspace{0.2cm}
  {\it Case $3$.} $u(\cdot,t)\not\equiv 0$ for all $t\in[0,T_{\rm max}(u_0))$.

  Reasoning as in Case 2, we obtain $J(u(\cdot,t))\leq J(u_0)<\textbf{r}$ and $N(u(\cdot,t))>0$ for all
  $t\in[0,T_{\rm max}(u_0))$. Coming back to (\ref{W-def}), we conclude $u(\cdot,t)\in\mathscr{W}$ for all
  $t\in[0,T_{\rm max}(u_0))$. From Lemma \ref{depth} $(iii)$, it follows that
  $$\|u(\cdot,t)\|_{1,a}<\sqrt{\frac{2(p+1)\textbf{r}}{p-1}},\quad\forall t\in[0,T_{\rm max}(u_0)).$$
  Hence $u(\cdot,t)$ is uniformly bounded in $W^{1,2}(V)$, and also uniformly bounded in $L^\infty(V)$. Then
  the extension theorem implies $T_{\rm max}(u_0)=+\infty$. There is
  \begin{equation}\label{sta-1}\|u(\cdot,t)\|_{L^\infty(V)}\rightarrow 0\quad {\rm as}\quad t\rightarrow+\infty\end{equation}
  left to be proved.
  
  Since $u(\cdot,t)$ is uniformly bounded, $J(u(\cdot,t))$ is a bounded univariate function, say there exists
  a positive constant $C$ such that
  $$J(u(\cdot,t))=\frac{1}{2}\|u(\cdot,t)\|_{1,a}^2-\frac{1}{p+1}\int_V|u|^2d\mu\geq -C,\quad\forall t\in[0,+\infty).$$
  Integrating (\ref{derivative}) with respect to $t$ from $0$ to $+\infty$, we have
  $$\int_0^{+\infty}\int_Vu_t^2d\mu dt\leq J(u_0)+C.$$
  Hence there exists $t_k\rightarrow+\infty$ verifying
  \begin{equation}\label{tend-0}u_t(x,t_k)\rightarrow 0
  \,\,{\rm as}\,\,k\rightarrow+\infty\,\,{\rm uniformly\,\,in}\,\,x\in V.\end{equation}
  Again, the uniform boundedness of $u(x,t_k)$ implies that up to a subsequence, there exists some function $v$ on $V$ such that
  \begin{equation}\label{tend-01}u(x,t_k)\rightarrow v(x)
  \,\,{\rm as}\,\,k\rightarrow+\infty\,\,{\rm uniformly\,\,in}\,\,x\in V\end{equation}
  and that
  \begin{equation}\label{Ju}J(v)=\lim_{k\rightarrow+\infty}J(u(\cdot,t_k))\leq J(u_0)<\textbf{r}.\end{equation}
  Observing that the equation (\ref{heatequation-1}) at $(x,t_k)$ reads as
  $$u_t(x,t_k)=\Delta u(x,t_k)-a(x)u(x,t_k)+|u(x,t_k)|^{p-1}u(x,t_k)$$
  and passing to the limit $k\rightarrow+\infty$, we obtain by (\ref{tend-0}) and (\ref{tend-01}),
  $$\Delta v(x)-a(x)v(x)+|v(x)|^{p-1}v(x)=0,\quad x\in V.$$
  Multiplying the above equation by $v$, we have by integration by parts
  \begin{equation}\label{Nv}N(v)=\|v\|_{1,a}^2-\int_V|v|^{p+1}d\mu=0.\end{equation}
  Combining (\ref{r-def}), (\ref{Ju}) and (\ref{Nv}), we conclude
  \begin{equation}\label{v0}v(x)\equiv 0,\quad x\in V.\end{equation}
  Let us come back to (\ref{sta-1}). Suppose it does not hold. Then there exist a positive constant $\epsilon_0$ and an increasing
  number sequence $s_k\rightarrow+\infty$ such that
  $\lim_{k\rightarrow +\infty}\|u(\cdot,s_k)\|_{L^\infty(V)}=\zeta_0$.
  Up to a subsequence, we may assume $u(\cdot,s_k)\rightarrow w$ uniformly in $V$ as $k\rightarrow+\infty$.
  Obviously we have
  \begin{equation}\label{contr}\|w\|_{L^\infty(V)}=\zeta_0>0.\end{equation}
  The monotonicity and
  boundedness of $J(u(\cdot,t))$ together with (\ref{v0}) lead to
  $$J(w)=\lim_{k\rightarrow+\infty}J(u(\cdot,s_k))=\lim_{k\rightarrow+\infty}J(u(\cdot,t_k))=J(v)=0.$$
  This together with (\ref{contr}) (in particular $w\not\equiv 0$) gives
  $$\|w\|_{1,a}^2=\frac{2}{p+1}\int_V|w|^{p+1}d\mu<\int_V|w|^{p+1}d\mu,$$
  which contradicts
  $$N(w)=\lim_{k\rightarrow+\infty}N(u(\cdot,s_k))\geq 0,$$
  since $u(\cdot,t)\in\mathscr{W}$ for all $t\in[0,+\infty)$.  Therefore $(\ref{sta-1})$ holds and the proof of $(i)$
  is completed.\\

  $(ii)$ Let $u_0\in\mathscr{S}$, i.e. $J(u_0)<\textbf{r}$ and $N(u_0)<0$. We shall prove $T_{\rm max}(u_0)<+\infty$.
  For otherwise, $T_{\rm max}(u_0)=+\infty$. Take a number $\epsilon$ satisfying
  $$0<\epsilon<\min\left\{-N(u_0),\textbf{r}-J(u_0)\right\}.$$
  By (\ref{d-e}) and the monotonicity of $J(u(\cdot,t))$, we have for all $t\in[0,+\infty)$,
  $$J(u(\cdot,t))\leq J(u_0)<\textbf{r}-\epsilon<\textbf{r}-\frac{\epsilon}{p+1}\leq\textbf{r}_\epsilon.$$
  Since $N(u_0)<-\epsilon$, it follows from the definition of $\textbf{r}_\epsilon$ in (\ref{d-e}) and the continuity
  of $N(u(\cdot,t))$ that $N(u(\cdot,t))<-\epsilon$ for all $t\in[0,+\infty)$. Hence by (\ref{N-def}) and (\ref{heatequation-1}),
  one calculates
  $$\frac{d}{dt}\int_Vu^2(\cdot,t)d\mu=2\int_Vuu_td\mu=-N(u(\cdot,t))>\epsilon.$$
  Set $y(t)=\int_Vu^2(\cdot,t)d\mu$. It then follows that
  \begin{equation}\label{infty}
  y(t)\rightarrow+\infty\quad{\rm as}\quad t\rightarrow+\infty.
  \end{equation}
  On the other hand, the H\"older's inequality implies
  $$y(t)\leq \left(\int_V|u(\cdot,t)|^{p+1}d\mu\right)^{\frac{2}{p+1}}|V|^{\frac{p-1}{p+1}},$$
  and thus
  \begin{equation}\label{Hold}
  \int_V|u(\cdot,t)|^{p+1}d\mu\geq |V|^{-\frac{p-1}{2}}y^{\frac{p+1}{2}}(t),\quad\forall t\geq 0.
  \end{equation}
  In view of (\ref{infty}), there exists a sufficiently large $t_0>0$ such that
  \begin{equation}\label{large-t0}
  \frac{p}{p+1}|V|^{-\frac{p-1}{2}}y^{\frac{p+1}{2}}(t)>2J(u_0),\quad\forall t\geq t_0.
  \end{equation}
  Combining (\ref{heatequation-1}), (\ref{en-funct}), (\ref{Hold}) and (\ref{large-t0}), we obtain for
  $t\in[t_0,+\infty)$,
  \begin{align*}
  \frac{d}{dt}y(t)&=-2\|u(\cdot,t)\|_{1,a}^2+2\int_V|u|^{p+1}d\mu\\
  &=-2J(u(\cdot,t))+\frac{2p}{p+1}\int_V|u|^{p+1}d\mu\\
  &\geq-2J(u_0)+\frac{2p}{p+1}|V|^{-\frac{p-1}{2}}y^{\frac{p+1}{2}}(t)\\
  &\geq\frac{p}{p+1}|V|^{-\frac{p-1}{2}}y^{\frac{p+1}{2}}(t),
  \end{align*}
  and whence
  $$
  \frac{2}{1-p}y^{\frac{1-p}{2}}(t)-\frac{2}{1-p}y^{\frac{1-p}{2}}(t_0)\geq \frac{p}{p+1}|V|^{-\frac{p-1}{2}}(t-t_0).
  $$
  This together with (\ref{infty}) gives a contradiction. Therefore $T_{\rm max}(u_0)<+\infty$.
  $\hfill\Box$

\section{Estimation of blow-up time}

In this section, we focus on estimating the blow-up time and blow-up rate of solutions to \eqref{heatequation-1} with $u_0\geq 0$. First, we give the proof of Theorem~\ref{finitetime}.
 
 \vspace{0.2cm}

{\emph{Proof of Theorem \ref{finitetime}}.} 
 $(i)$ Integrating both sides of (\ref{heatequation-1}) and noting that
$$\left(\frac{1}{|V|}\int_Vu^pd\mu\right)^{1/p}$$
is non-decreasing in $p>1$, we have
\begin{align*}
\frac{d}{dt}\int_Vud\mu&=\int_V\Delta ud\mu+\int_Vu^pd\mu-\int_Vaud\mu\\
&\geq\int_Vu^pd\mu-a_0\int_Vud\mu\\
&\geq|V|\left(\frac{1}{|V|}\int_Vud\mu\right)^p-a_0\int_Vud\mu\\
&=|V|^{1-p}\left(\int_Vud\mu\right)^p-a_0\int_Vud\mu,
\end{align*}
where $a_0=\max_{x\in V}a(x)$.
Denote $z(t)=\int_Vu(\cdot,t)d\mu$, $z_0=z(0)=\int_Vu_0d\mu$ and $a_1=|V|^{1-p}$. The above differential inequality gives
$$z^\prime(t)\geq a_1z^p(t)-a_0z(t),\quad t\in[0,T),$$
which can be solved as follows
\begin{equation}\label{geq}z^{p-1}(t)\geq\frac{1}{(z_0^{1-p}-\frac{a_1}{a_0})e^{(p-1)a_0t}+\frac{a_1}{a_0}}.\end{equation}
If $z_0^{1-p}<a_1/a_0$, or equivalently $z_0>(a_1/a_0)^{1/(p-1)}$, then (\ref{geq}) implies
$$\left(\frac{a_1}{a_0}-z_0^{1-p}\right)e^{(p-1)a_0t}<\frac{a_1}{a_0}.$$
Hence
$$t\leq \frac{1}{(p-1)a_0}\log\frac{1}{1-\frac{a_0}{a_1}z_0^{1-p}},$$
and thus
$$T_{\rm max}(u_0)\leq \frac{1}{(p-1)a_0}\log\frac{1}{1-\frac{a_0}{a_1}z_0^{1-p}},$$
as we desired.

$(ii)$ 
Multiplying both sides of (\ref{heatequation-1}) by $\varphi$ and integrating by parts, we have
\begin{align}\nonumber
\frac{d}{dt}\int_V\varphi ud\mu&=\int_V\varphi\Delta ud\mu-\int_Va\varphi u d\mu+\int_V\varphi u^pd\mu\\
&=\int_V(\Delta\varphi-a\varphi)ud\mu+\int_V\varphi u^pd\mu\nonumber\\
&=-\lambda_a\int_V\varphi ud\mu+\int_V\varphi u^pd\mu.\label{deriv}
\end{align}
Since $\varphi>0$ on $V$, it follows from the H\"older's inequality that
$$
\frac{\int_V\varphi ud\mu}{\int_V\varphi d\mu}\leq \frac{(\int_V\varphi u^pd\mu)^{1/p}(\int_V\varphi d\mu)^{1-1/p}}{\int_V\varphi d\mu}
=\left(\frac{\int_V\varphi u^pd\mu}{\int_V\varphi d\mu}\right)^{1/p}.
$$
Hence
\begin{equation}\label{up}
\int_V\varphi u^pd\mu\geq (\int_V\varphi d\mu)^{1-p}(\int_V\varphi u d\mu)^p.
\end{equation}
Setting $y(t)=\int_V\varphi(x) u(x,t)d\mu$, $y_0=\int_V\varphi u_0d\mu$, and inserting (\ref{up}) into (\ref{deriv}), one derives
$$y^\prime(t)\geq b_1y^p(t)-b_0y(t),$$
where $b_0=\lambda_a$ and $b_1=\left(\int_V\varphi d\mu\right)^{1-p}$.
This leads to an analog of (\ref{geq}),
\begin{equation}\label{geq-1}
y^{p-1}(t)\geq\frac{1}{(y_0^{1-p}-\frac{b_1}{b_0})e^{(p-1)b_0t}+\frac{b_1}{b_0}}.\end{equation}
If $y_0^{1-p}<b_1/b_0$, or equivalently $y_0>(b_1/b_0)^{1/(p-1)}$, then (\ref{geq-1}) implies
$$\left(\frac{b_1}{b_0}-y_0^{1-p}\right)e^{(p-1)b_0t}<\frac{b_1}{b_0}.$$
Hence
$$t\leq \frac{1}{(p-1)b_0}\log\frac{1}{1-\frac{b_0}{b_1}y_0^{1-p}},$$
and thus
$$T_{\rm max}(u_0)\leq \frac{1}{(p-1)b_0}\log\frac{1}{1-\frac{b_0}{b_1}y_0^{1-p}}.$$
This is exactly (\ref{T-2}).
$\hfill\Box$\\


{\emph{Proof of Theorem \ref{f-time1}}.} 
Assume $u:V\times[0,T)\rightarrow\mathbb{R}$ is a solution of the equation (\ref{heatequation-1}) with $u_0\geq 0$. Then, along this flow,
an integration by parts gives
\begin{align*}
\frac{d}{dt}J(u(\cdot,t))&=\int_V(\nabla u\nabla u_t+auu_t)d\mu-\int_Vu^pu_td\mu\\
&=\int_V(-\Delta u+au-u^p)u_td\mu\\
&=-\int_Vu_t^2d\mu.
\end{align*}
This implies the flow (\ref{heatequation-1}) is a negative gradient flow of the energy functional $J$.
In particular,
\begin{equation}\label{decreasing}J(u(\cdot,t))\,\,\, {\rm is\,\, decreasing\,\, in}\,\,\,t\in[0,T).\end{equation}

Multiplying both sides of (\ref{heatequation-1}) by $u$ and applying (\ref{decreasing}) and the H\"older's inequality, we arrive at
\begin{align*}
\frac{d}{dt}\int_Vu^2d\mu&=-2\int_V(|\nabla u|^2+au^2)d\mu+2\int_Vu^{p+1}d\mu\\
&=-4J(u(\cdot,t))+\frac{2p-2}{p+1}\int_Vu^{p+1}d\mu\\
&\geq-4J(u_0)+\frac{2p-2}{p+1}|V|^{\frac{1-p}{2}}\left(\int_Vu^2d\mu\right)^{\frac{p+1}{2}}.
\end{align*}
Set $w(t)=\int_Vu^2(\cdot,t)d\mu$ for $t\in[0,T)$, $w_0=\int_Vu_0^2d\mu$, $d_0=4J(u_0)$,
$d_1=\frac{2p-2}{p+1}|V|^{\frac{1-p}{2}}$ and $\alpha=\frac{p+1}{2}$.
Then the above differential inequality reads as
\begin{equation}\label{w}w^\prime(t)\geq -d_0+d_1w^{\alpha}(t).\end{equation}
If $d_0<0$, then $w_0>0$ and $w^\prime(t)\geq d_1w^{\alpha}(t)$. It follows that $(w^{1-\alpha})^\prime\leq (1-\alpha)d_1$, and that
$$w(t)\geq \left(\frac{1}{w_0^{1-\alpha}+(1-\alpha)d_1t}\right)^{1/(\alpha-1)}.$$
Hence
\begin{equation}\label{T}T_{\rm max}(u_0)\leq\frac{w_0^{1-\alpha}}{(\alpha-1)d_1}.
\end{equation}
If $d_0\geq 0$ and $d_1w_0^{\alpha}>d_0$, then we conclude that $w(t)$ is increasing in $t$ and
$$w^\prime(t)\geq \eta_0=d_1w_0^{\alpha}-d_0>0.$$
The Newton-Lebnitz formula leads to $w(t)\geq w_0+\epsilon_0t$. As long as
$$t>\frac{(\frac{2d_0}{d_1})^{1/\alpha}-w_0}{\eta_0},$$
there holds $\frac{1}{2}d_1w^\alpha(t)>d_0$. Inserting this into (\ref{w}), we have the inequality $w^\prime(t)\geq
\frac{d_1}{2}w^{\alpha}(t)$
and an analog of (\ref{T})
$$T_{\rm max}(u_0)\leq\frac{2w_0^{1-\alpha}}{(\alpha-1)d_1}.$$
Therefore
$$T_{\rm max}(u_0)\leq \max\left\{\frac{(2d_0/d_1)^{1/\alpha}-w_0}{\eta_0},\frac{2w_0^{1-\alpha}}{(\alpha-1)d_1}\right\}.$$
This ends the proof of $(iii)$. 
$\hfill\Box$

Next, we establish another criterion that guarantees blow-up when the initial condition exceeds a positive equilibrium.

\begin{proposition}\label{equi}
Suppose that the problem (\ref{heatequation-1}) has an equilibrium $v$ satisfying $v>0$ on $V$. If $u_0\geq v$ and
$u_0\not\equiv v$, then $T_{\rm max}(u_0)<+\infty$.
\end{proposition}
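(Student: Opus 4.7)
The plan is to argue by contradiction from $T_{\rm max}(u_0)=+\infty$ and exhibit a time $T_0$ at which $\int_V\varphi u(\cdot,T_0)\,d\mu>c_2$, so that restarting Theorem \ref{finitetime}(ii) at $T_0$ yields a later finite-time blow-up in conflict with the supposed infinite lifespan. The mechanism producing such a large weighted mass is the linear instability of the equilibrium $v$.

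To set this up, first invoke the comparison principle (Lemma \ref{comparison-1}) applied to $u$ and the stationary solution $v$: because $v$ satisfies the same evolution relation with equality, the hypothesis $u_0\geq v$ yields $u(\cdot,t)\geq v$ throughout $[0,T_{\rm max}(u_0))$. Writing $z=u-v\geq 0$ and using the tangent-line inequality $u^p-v^p\geq pv^{p-1}(u-v)$ valid for the convex function $s\mapsto s^p$, the displacement satisfies
\begin{equation*}
z_t \;\geq\; \Delta z - az + pv^{p-1}z .
\end{equation*}
The relevant spectral object is the Schr\"odinger-type operator $L:=-\Delta+a-pv^{p-1}$. Adapting the minimum-principle argument of Lemma \ref{lambda}, I would produce a strictly positive first eigenfunction $\phi>0$ with eigenvalue $\mu_1$, and observe $\mu_1<0$ via the Rayleigh-quotient test at $\eta=v$: since $(-\Delta+a-v^{p-1})v=0$, this test gives $\mu_1\leq -(p-1)\int_V v^{p+1}\,d\mu/\int_V v^2\,d\mu<0$.

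Testing the displayed inequality for $z$ against $\phi$ and integrating by parts would give $\tfrac{d}{dt}\int_V\phi z\,d\mu\geq |\mu_1|\int_V\phi z\,d\mu$, so Gronwall's inequality yields
\begin{equation*}
\int_V\phi z(\cdot,t)\,d\mu \;\geq\; \Bigl(\int_V\phi(u_0-v)\,d\mu\Bigr)\,e^{|\mu_1|t} ,
\end{equation*}
with strictly positive prefactor because $\phi>0$ everywhere and $u_0\not\equiv v$. Since $\varphi$ and $\phi$ are both strictly positive on the finite vertex set $V$, there is $\kappa>0$ with $\varphi\geq\kappa\phi$ pointwise, hence
\begin{equation*}
\int_V\varphi u(\cdot,t)\,d\mu \;\geq\; \int_V\varphi v\,d\mu + \kappa\int_V\phi z(\cdot,t)\,d\mu \;\longrightarrow\; +\infty .
\end{equation*}
Picking $T_0$ with $\int_V\varphi u(\cdot,T_0)\,d\mu>c_2$ and noting $u(\cdot,T_0)\geq v>0$, Theorem \ref{finitetime}(ii) applies to the Cauchy problem restarted from $u(\cdot,T_0)$, which by uniqueness (Theorem \ref{short-time}) and the autonomy of \eqref{heatequation-1} is nothing but $s\mapsto u(\cdot,T_0+s)$. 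The resulting finite blow-up time contradicts $T_{\rm max}(u_0)=+\infty$.

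The only step I expect to require genuine work is the spectral input: constructing the positive first eigenfunction $\phi$ of $L$ and checking $\mu_1<0$. Positivity comes from the same Perron--Frobenius-type minimum-principle argument used at the end of Lemma \ref{lambda} (relying on connectedness of $G$), and the sign of $\mu_1$ is the one-line Rayleigh computation above. Everything else is routine bookkeeping: a single convexity inequality, one integration by parts, the pointwise comparability of two positive functions on a finite set, and a single invocation of the blow-up criterion already established in Theorem \ref{finitetime}(ii).
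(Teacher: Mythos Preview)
Your argument is correct but follows a genuinely different route from the paper's. The paper never linearizes: after the comparison $u\geq v$, it applies a strong-minimum-principle argument at a small time $\tau>0$ to upgrade to the strict inequality $u(\cdot,\tau)\geq\beta v$ for some $\beta>1$, and then shows directly that the weighted mass $w(t)=\int_V uv\,d\mu$ satisfies the superlinear differential inequality $w'\geq(1-\beta^{1-p})\bigl(\int_V v\,d\mu\bigr)^{1-p}w^p$, which forces finite-time blow-up and even yields an explicit upper bound on $T_{\rm max}(u_0)$. Your route instead exposes the underlying \emph{linear instability} of $v$: you introduce the Schr\"odinger operator $L=-\Delta+a-pv^{p-1}$, show via the Rayleigh test at $v$ that its bottom eigenvalue is negative with positive eigenfunction $\phi$, deduce exponential growth of $\int_V\phi(u-v)\,d\mu$, and then invoke Theorem~\ref{finitetime}(ii) as a black box after restarting at a late time. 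This is more conceptual---it explains \emph{why} the equilibrium is unstable---but it requires setting up a second eigenproblem and does not by itself give an explicit bound on the blow-up time. The paper's approach is fully self-contained and quantitative; yours is modular and highlights the spectral mechanism.
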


\begin{proof}
Note that
\begin{equation}\label{bril}\Delta v-av+v^p=0\end{equation}
and $0<v\leq u_0$ in $V$.
By the comparison principle (Lemma \ref{comparison-1}), we have
$$u(x,t)\geq v(x),\quad\forall (x,t)\in V\times[0,T_{\rm max}(u_0)).$$
Since $u_0\not\equiv v$, by the continuity, there exists a sufficiently small $\tau_0>0$ such that for all $t\in[0,\tau_0]$,
there holds $u(\cdot,t)\not\equiv v(\cdot)$.
Let $\tau\in(0,\tau_0]$ be fixed. We {\it claim} that
\begin{equation}\label{tau}
u(x,\tau)>v(x),\quad\forall x\in V.
\end{equation}
For otherwise, there exists some $x_0\in V$ such that
$$u(x_{0},\tau)-v(x_0):=(u-v)(x_0,\tau)=\min_{x\in V}(u-v)(x,\tau)=0.$$
 Hence
\begin{align*}
0\geq \partial_t(u-v)(x_0,\tau)&=\Delta (u-v)(x_0,\tau)-a(x_0)(u-v)(x_0,\tau)+u^p(x_0,\tau)-v^p(x_0)\\
&=\Delta (u-v)(x_0,\tau)\geq 0.
\end{align*}
This leads to $u(x,\tau)=v(x)$ for all $x\sim x_0$. Repeating this process, we have $u(x,\tau)=v(x)$ for all $x\in V$,
contradicting the fact $u(\cdot,\tau)\not\equiv v$ on $V$. Hence our claim (\ref{tau}) follows. It then follows that
$$u(x,\tau)\geq \beta v(x),\quad\forall x\in V$$
for some $\beta>1$. In view of (\ref{bril}),
$$\partial_t(\beta v)-\Delta (\beta v)+a(\beta v)-(\beta v)^p<-\beta\Delta v+a\beta v-\beta v^p=
\partial_t u-\Delta u+au-u^p$$
 Again, by the comparison principle, we obtain
 \begin{equation}\label{strict}u(x,t)\geq \beta v(x),\quad\forall (x,t)\in V\times [\tau,T_{\rm max}).\end{equation}
 Set $w=w(t)=\int_Vu(\cdot,t)vd\mu$. Multiplying the equation (\ref{heatequation-1}) by $v$, integrating by parts,
 and using (\ref{strict}) and the H\"older's inequality, we have for $t\in[\tau,T_{\rm max}(u_0))$,
 \begin{align*}
 w^\prime&=\int_Vu_tvd\mu=\int_Vu\Delta vd\mu+\int_V(u^p-au)vd\mu\\
 &=\int_Vu(av-v^p)d\mu+\int_V(u^p-au)vd\mu\\
 &=\int_V\left(1-\left(\frac{v}{u}\right)^{p-1}\right) u^pvd\mu\\
 &\geq(1-\beta^{1-p})\left(\int_Vvd\mu\right)^{1-p}w^p.
 \end{align*}
 Hence
 $$\frac{1}{1-p}w^{1-p}(t)-\frac{1}{1-p}w^{1-p}(\tau)\geq (1-\beta^{1-p})\left(\int_Vvd\mu\right)^{1-p}(t-\tau),$$
 which together with $\lim_{t\rightarrow T_{\rm max}(u_0)}w(t)=+\infty$ gives
 $$T_{\rm max}(u_0)\leq \tau+\frac{w^{1-p}(\tau)}{(p-1)(1-\beta^{1-p})}\left(\int_Vvd\mu\right)^{p-1}.$$
 This ends the proof of the theorem. 
 \end{proof}

 For the blow-up rate, we have the following result.

 \begin{proposition}\label{speed}
Let $u:[0,T)\rightarrow \mathbb{R}$ be a solution of (\ref{heatequation-1}) with $u_0\geq 0$ and $\lim_{t\rightarrow T-0}\|u\|_{L^\infty(V)}=+\infty$.
Then there holds for all $t\in[0,T)$,
$$\lim_{t\rightarrow T-0}(T-t)\left(\max_{x\in V}u(x,t)\right)^{p-1}=\frac{1}{p-1}.$$
\end{proposition}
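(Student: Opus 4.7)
The plan is to sandwich $(T-t)M(t)^{p-1}$, where $M(t):=\max_{x\in V}u(x,t)$, between $1/(p-1)\pm o(1)$ as $t\to T^{-}$, by deriving matching one-sided ODE inequalities for $M$. Since $V$ is finite, $M$ is the pointwise maximum of the $N$ smooth scalar functions $t\mapsto u(x_i,t)$, hence locally Lipschitz and differentiable a.e.; by a standard envelope argument, at such a $t$ we have $M'(t)=\partial_t u(x^{*}(t),t)$ for any argmax $x^{*}(t)$. By Corollary \ref{cor1}, applied on each $[0,\tau]\subset[0,T)$, we also know $u\geq 0$ throughout $V\times[0,T)$; this nonnegativity is what makes both halves of the estimate go through.

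For the lower bound on $(T-t)M^{p-1}$, at any argmax $x^{*}(t)$ the graph Laplacian satisfies $\Delta u(x^{*},t)\leq 0$, and $a(x^{*})u(x^{*},t)\geq 0$, so \eqref{heatequation-1} gives
$$M'(t)=\partial_t u(x^{*}(t),t)\leq u(x^{*}(t),t)^{p}=M(t)^{p}\quad\text{for a.e.\ }t.$$
Equivalently $(M^{1-p})'(t)\geq -(p-1)$ a.e.; integrating from $t$ up to any $s<T$ and letting $s\to T^{-}$, where by hypothesis $M(s)\to+\infty$ and hence $M(s)^{1-p}\to 0$, yields $M(t)^{1-p}\leq (p-1)(T-t)$, that is, $(T-t)M(t)^{p-1}\geq 1/(p-1)$.

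For the reverse inequality I use $u\geq 0$ in the opposite direction. At the argmax $x^{*}$,
$$\Delta u(x^{*},t)=\frac{1}{\mu(x^{*})}\sum_{y\sim x^{*}}\omega_{x^{*}y}\bigl(u(y,t)-u(x^{*},t)\bigr)\geq -\left(\frac{1}{\mu(x^{*})}\sum_{y\sim x^{*}}\omega_{x^{*}y}\right) M(t),$$
since every $u(y,t)\geq 0$. Setting $C_{0}:=\max_{x\in V}\left(\frac{1}{\mu(x)}\sum_{y\sim x}\omega_{xy}+a(x)\right)$, a constant depending only on $G$ and $a$, this produces $M'(t)\geq M(t)^{p}-C_{0}M(t)$ a.e. Writing $\phi(t):=M(t)^{1-p}$, which is positive on $[0,T)$ and tends to $0$ as $t\to T^{-}$, the inequality rearranges into the linear differential inequality
$$\phi'(t)-(p-1)C_{0}\,\phi(t)\leq -(p-1).$$
Multiplying by the integrating factor $e^{-(p-1)C_{0}t}$, integrating from $t$ to $s$, and letting $s\to T^{-}$ with $\phi(s)\to 0$, yields
$$\phi(t)\geq \frac{1}{C_{0}}\bigl(1-e^{-(p-1)C_{0}(T-t)}\bigr)=(p-1)(T-t)+O\bigl((T-t)^{2}\bigr).$$
Hence $(T-t)M(t)^{p-1}=(T-t)/\phi(t)\leq 1/(p-1)+o(1)$ as $t\to T^{-}$, which combined with the opposite inequality gives the stated limit.

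The only mildly subtle point is the envelope identity $M'(t)=\partial_t u(x^{*}(t),t)$ a.e., but this is routine for the maximum of finitely many $C^{1}$ functions. The real content of the argument is the observation that nonnegativity of $u$, guaranteed here by Corollary \ref{cor1}, provides both a one-sided upper bound on the Laplacian at an interior maximum (trivially) and a matching lower bound of the form $\Delta u(x^{*},t)\geq -C_{0}M(t)$; without this sign control on $u$ one would lose the lower bound on $\Delta u$ at the max and be forced into a finer rescaling argument, which is where I would have expected the main difficulty to lie.
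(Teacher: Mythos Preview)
Your proof is correct and follows essentially the same approach as the paper: the paper likewise sets $\phi(t)=\max_{x\in V}u(x,t)$, establishes the envelope identity $\phi'(t)=(\partial_t u)(x_t,t)$ a.e., and derives the two one-sided inequalities $\phi'\leq \phi^{p}$ and $\phi'\geq \phi^{p}-A\phi$ with $A=\max_{z\in V}\bigl(\mu(z)^{-1}\sum_{y\sim z}\omega_{zy}+a(z)\bigr)$, i.e.\ your $C_{0}$. The resulting bound $(T-t)\phi^{p-1}(t)\leq A(T-t)/(1-e^{-(p-1)A(T-t)})$ in the paper is exactly your inequality $\phi(t)\geq C_{0}^{-1}(1-e^{-(p-1)C_{0}(T-t)})$ rewritten, so the arguments coincide.
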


\proof Let $u(x,t)$ be a solution of (\ref{heatequation-1}) existing in the interval $[0,T)$. Let
\begin{equation}\label{eq-6}\phi(t)=\max_{x\in V}u(x,t)=u(x_t,t),\end{equation}
where $x_t\in V$ is one of the maximum points of $\phi$ for each $t\in[0,T)$. We {\it claim} that $\phi$
is a locally Lipschitz function, in particular, for almost every $t\in[0,T)$, $\phi$ is differentiable
at $t$ and
\begin{equation}\label{der-1}\phi^\prime(t)=(\partial_tu)(x_t,t).\end{equation}
Indeed, for any $t_1,t_2\in[0,T)$, there exist two points $x_{t_1}$ and $x_{t_2}\in V$ such that $\phi(t_1)=u(x_{t_1},t_1)$ and
$\phi(t_2)=u(x_{t_2},t_2)$. On one hand,
\begin{align}\nonumber
\phi(t_1)-\phi(t_2)&=u(x_{t_1},t_1)-u(x_{t_2},t_2)\\\nonumber
&=u(x_{t_1},t_1)-u(x_{t_1},t_2)+u(x_{t_1},t_2)-u(x_{t_2},t_2)\\\nonumber
&\leq(\partial_tu)(x_{t_1},\xi)(t_1-t_2)\\
&\leq|(\partial_tu)(x_{t_1},\xi)||t_1-t_2|,\label{eq-7}
\end{align}
since $u(x_{t_1},t_2)\leq u(x_{t_2},t_2)$, where $\xi$ lies between ${t_1}$ and ${t_2}$. On the other hand,
\begin{align}\nonumber
\phi(t_2)-\phi(t_1)&=u(x_{t_2},t_2)-u(x_{t_1},t_1)\\\nonumber
&=u(x_{t_2},t_2)-u(x_{t_2},t_1)+u(x_{t_2},t_1)-u(x_{t_1},t_1)\\\nonumber
&\leq(\partial_tu)(x_{t_2},\eta)(t_2-t_1)\\
&\leq|(\partial_tu)(x_{t_2},\eta)||t_1-t_2|,\label{eq-8}
\end{align}
since $u(x_{t_2},t_1)\leq u(x_{t_1},t_1)$, where $\eta$ lies between ${t_1}$ and ${t_2}$. Hence we conclude that
$\phi(t)$ is locally Lipschitz in $t\in[0,T)$. In particular, $\phi^\prime(t)$ is differentiable almost everywhere in
$[0,T)$. To prove (\ref{der-1}), we fix any $t\in[0,T)$ such that $\phi^\prime(t)$ exists. For any $h>0$ with $t+h<T$, we have by an analog of (\ref{eq-7}),
$$\frac{\phi(t+h)-\phi(t)}{h}\geq(\partial_tu)(x_t,t+\theta h),\quad \theta\in(0,1).$$
Letting $h\rightarrow 0+$, we get $\phi^\prime(t)\geq (\partial_tu)(x_t,t)$. Similarly, if $0<h<t$, then an analog of (\ref{eq-8}) leads to
$$\frac{\phi(t-h)-\phi(t)}{-h}\leq (\partial_tu)(x_{t},t-\tau h),\quad \tau\in(0,1).$$
It follows that $\phi^\prime(t)\leq (\partial_tu)(x_t,t)$. Thus $\phi^\prime(t)=(\partial_tu)(x_t,t)$ and our claim follows.

We now proceed to prove the theorem. On one hand, if $\phi^\prime(t)$ exists, then we have
\begin{align*}
\phi^\prime(t)&=(\partial_tu)(x_t,t)\\
&=\Delta u(x_t,t)-a(x_t)u(x_t,t)+u^p(x_t,t)\\
&\leq\phi^p(t).
\end{align*}
Integrating both sides of the above differential inequality on the time interval $[t,T)$, we obtain
\begin{equation}\label{leq}
(T-t)\phi^{p-1}(t)\geq\frac{1}{p-1}.
\end{equation}
On the other hand, $\forall (x,t)\in V\times[0,T)$, one calculates
\begin{align*}
\partial_t u(x,t)&=\Delta u(x,t)-a(x)u(x,t)+u^p(x,t)\\
&\geq-\frac{\sum_{y\sim x}\omega_{xy}}{\mu(x)}u(x,t)-a(x)u(x,t)+u^p(x,t)\\
&\geq-Au(x,t)+u^p(x,t),
\end{align*}
where
$$A=\max_{z\in V}\left\{\frac{\sum_{y\sim z}w_{zy}}{\mu(z)}+a(z)\right\}.$$
Hence for almost every $t\in[0,T)$, there holds
$$\phi^\prime(t)\geq -A\phi(t)+\phi^p(t).$$
As a consequence, one has
\begin{equation*}
(T-t)\phi^{p-1}(t)\leq \frac{A(T-t)}{1-e^{-(p-1)A(T-t)}},
\end{equation*}
and thus
$$\limsup_{t\rightarrow T-0}\,(T-t)\phi^{p-1}(t)\leq \frac{1}{p-1}.$$
This together with (\ref{leq}) gives the desired result. $\hfill\Box$

\section{Applications in complex dynamical networks}

For simplicity, we consider a weighted and connected network $G_{25}$ consisting of $25$ nodes (Figure \ref{fig1}), whose states are described by $1$-dimensional scalar functions $u_i(t)$, i.e., we set $n=1$. We also take $\alpha=1$, $H=1$ and set $f(u_i)=|u_i|^{p-1}u_i$ in \eqref{system1}. Obviously, $f(\cdot)$ is not a Lipschitz function. The controller $a_i$ is chosen as $a(u_i-\bar{u})$, where $a$ is a positive constant. At this stage, system \eqref{system1} reduces to
\begin{equation}\label{equation}
	\dot{u_i} =|u_i|^{p-1}u_i+\sum_{j=1}^{N}l_{ij}u_j+a_i(u_i-\bar{u}),
\end{equation}
where $u_i=u(x_i,t)$ and $x_i\in V$.
When considering the evolution of such a nonlinear system \eqref{system1}, we did not use the conventional methods typically applied in the study of ordinary differential systems. Instead, we treat it as a partial differential equation on a graph, which is equivalent to studying the following nonlinear heat equation on the graph
\begin{equation}\label{heat}
	\partial_t u =\Delta u-a(u-\overline{u})+|u|^{p-1}u,
\end{equation}
where $u: V\times [0,+\infty)\rightarrow \mathbb{R}$ and $u(x_i,t)=u_i(t)$ is a scalar function representing the state of node $x_i$. If we take the equilibrium state as $\overline{u}=0$, one immediately sees that equation \eqref{heat} coincides with equation \eqref{heatequation-1} in our consideration. 

We set the measure at each node $x_i\in V$ to $\mu(x_i)=1$ and the weight on each edge $e_{ij}\in E$ is also set to $\omega_{ij}=1$. In the network $G_{25}$, to highlight the central position of node $x_1$, we set the initial values $u_i(0)=0.001$ and $a_i=2$ for $i\neq 1$, while assigning $x_1$ an initial value of $0.03$ with $a_1=0$. First, we simulate (ii) of Theorem \ref{global} to demonstrate how the initial values $u_0$ and the first eigenvalue $\lambda_a$ govern the network's convergence to the zero equilibrium when long-time solutions exist. Here we set $p=2$. For the network in Figure \ref{fig1}, $\lambda_a=1.9116$ and $\|u_0\|_{L^2(V)}=0.0304<\epsilon_0=0.0365$. We solve the system using MATLAB's ode45 solver and the results are presented in Figure \ref{fig2}, which includes: (i) dynamic curves of four representative nodes $x_1$, $x_2$, $x_{20}$ and $x_{21}$, (ii) the decay-rate control curve determined by initial values and the first eigenvalue, i.e., $\frac{1}{\sqrt{\mu_{\rm min}}}\|u_0\|_{L^2(V)}\,e^{-\frac{\lambda_a}{2} t}$, and (iii) magnified views of the aforementioned curves.
 
  \begin{figure}[H]
	\centering
	\includegraphics[width=12cm]{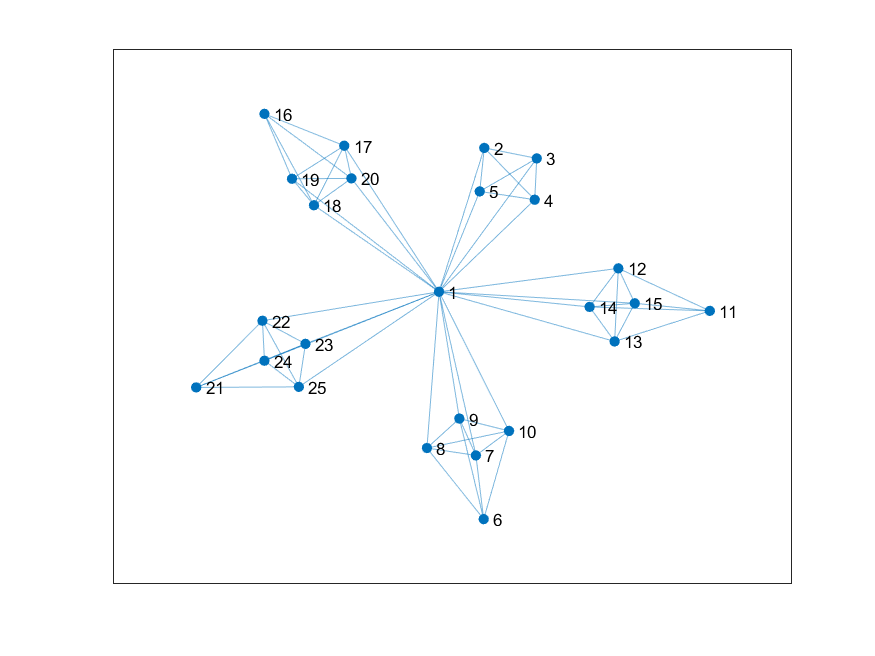}
	\caption{The network $G_{25}$ with $25$ nodes}\label{fig1}
\end{figure}

\begin{figure}[H]
	\centering
	\includegraphics[width=12cm]{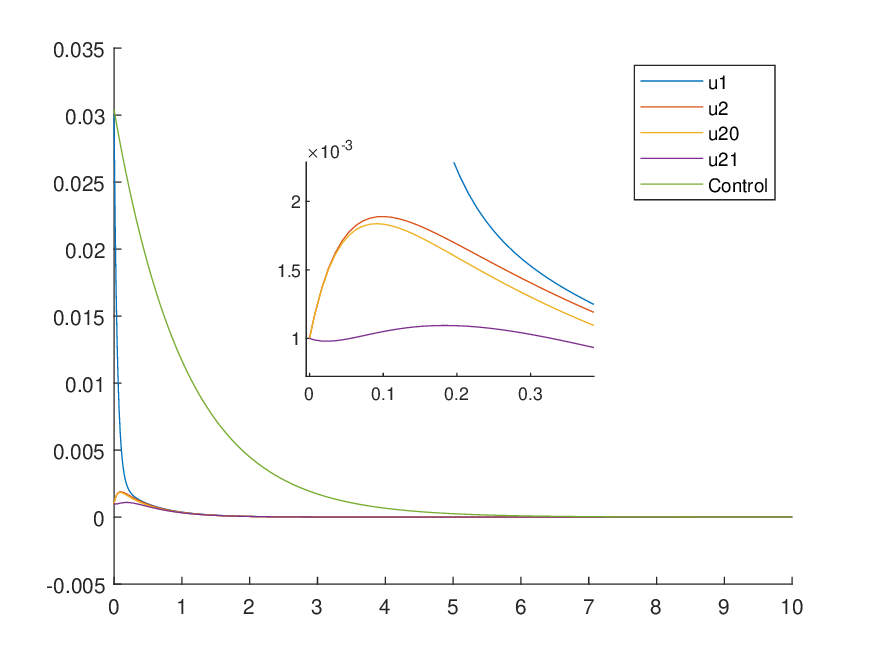}
	\caption{Dynamic curves of $G_{25}$}\label{fig2}
\end{figure}
  
Next, we proceed to verify part (i) of Theorem \ref{finitetime}, demonstrating how the relationship between initial values and the controller $\mathbf{a}_i$ enables finite-time blow-up of the dynamical network. For the network $G_{25}$ in Figure \ref{fig1}, we still set the measure at each node $x_i\in V$ to $\mu(x_i)=1$ and the weight on each edge $e_{ij}\in E$ is set to $\omega_{ij}=1$. We take $p=3$, $a_i=2$ for $i\neq 1$, and $a_1=0$. The initial value $u_1(0)=0.5$, while initial values of other nodes is set to $1.5$. Direct calculation yields
$$\int_Vu_0d\mu=36.5000>\left(\max_{x\in V}a(x)\right)^{{1}/{(p-1)}}|V|=35.3553.$$
According to Theorem \ref{finitetime}, the system will experience blow-up at time $T_{\rm max}(u_0)$, which occurs earlier than
\begin{equation*}
	\frac{1}{(p-1)\max_{x\in V}a(x)}\log\frac{1}{1-|V|^{p-1}\max_{x\in V}a(x)
		\left(\int_Vu_0d\mu\right)^{1-p}}=0.6962
\end{equation*}
We still solve the system using MATLAB's ode45 solver and the results of dynamic curves of node $x_1$ and $x_9$ (curves of other nodes are similar to that of node $x_9$) are presented in Figure \ref{fig3}. As visible from the figure, the system experiences blow-up before $t=0.0045$.

\begin{figure}[H]
	\centering
	\includegraphics[width=12cm]{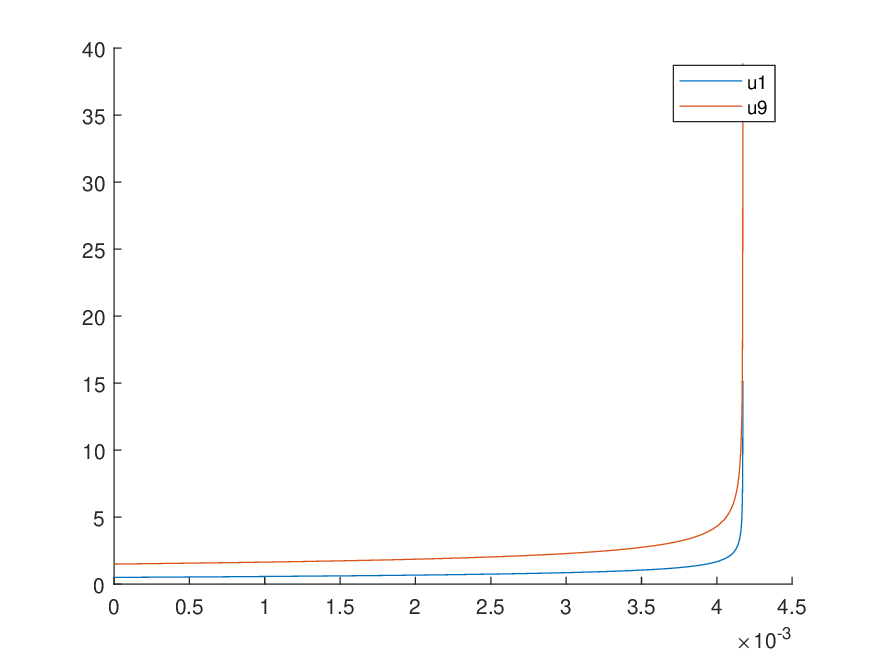}
	\caption{Blow-up of $G_{25}$}\label{fig3}
\end{figure}

\begin{remark}
	Although the estimates in Theorems \ref{global} and \ref{finitetime} regarding long-time existence or blow-up of solutions are not sharp, they nevertheless provide a feasible approach to steer the complex dynamical networks toward equilibrium or blow-up. Specifically, Theorem \ref{global} guarantees that if the initial value is smaller than a constant $\epsilon_0$ (determined by structure parameters of the network), the solution exists for all time and converges exponentially to the $\mathbf{u}=\mathbf{0}$. Theorem \ref{finitetime} asserts that blow-up inevitably occurs when the control parameter $\mathbf{a}_i$ is sufficiently small (or the initial value sufficiently large) to satisfy the blow-up condition stated in the theorem.
\end{remark}

\section*{Acknowledgements}

This research is supported by National Natural Science Foundation of China (No. 12271039 and No. 12101355), the National Key R and D Program of
China (No. 2020YFA0713100), the Open Project Program (No. K202303) of Key Laboratory of
Mathematics and Complex Systems, Beijing Normal University and the Youth Innovation Science and Technology Support Program for Universities in Shandong Province (No. 2024KJG008).

\bibliographystyle{elsarticle-num-names-alpha}

\bibliography{mybib-graph}

\end{document}